\documentclass[12pt]{amsart}
\linespread{1.3}
\usepackage[cp850]{inputenc}
\usepackage{graphics}
\usepackage{enumitem}

\oddsidemargin=-1mm
\evensidemargin=-1mm
\textheight=200mm
\textwidth=150mm

\newtheorem{thm}{Theorem}[section]
\newtheorem*{thmA}{Theorem A}
\newtheorem*{thmB}{Theorem B}
\newtheorem*{thmA'}{Theorem A'}
\newtheorem*{thmB'}{Theorem B'}
\newtheorem*{thmAbis}{Theorem \^{A}}
\newtheorem*{thmA''}{Theorem A''}
\newtheorem*{thmB''}{Theorem B''}

\newtheorem{pro}[thm]{Proposition}
\newtheorem{cor}[thm]{Corollary}

\theoremstyle{definition}
\newtheorem{defi}[thm]{Definition}

\newtheorem{rmk}[thm]{Remark}

\numberwithin{equation}{section}

\usepackage{amssymb,amsmath}

\def\div{\mathop\mathrm{div}\nolimits}

\def\tr{\mathop\mathrm{tr}\nolimits}

\def\Ric{\mathop\mathrm{Ric}\nolimits}
\def\cut{\mathop\mathrm{cut}\nolimits}
\def\Krad{\mathop{K_{\mathrm{rad}}}\nolimits}

\newcommand{\Hess}{\operatorname{Hess}}

\newcommand{\e}{\mbox{$\mathrm{e}$}}
\newcommand{\R}{\mbox{${\mathbb R}$}}

\newcommand{\g}[2]{\mbox{$\langle #1 ,#2 \rangle$}}
\newcommand{\fle}{\mbox{$\rightarrow$}}
\newcommand{\ep}{\varepsilon}
\newcommand{\rf}[1]{\mbox{(\ref{#1})}}
\newcommand{\rl}[1]{{~\ref{#1}}}

\newcommand{\M}{(M,\g{\ }{\ })}
\newcommand{\uin}{u\in\mathcal{C}^2(M)}

\def\beq{\begin{equation}}
\def\eeq{\end{equation}}

\begin{document}

\title[A general form of the weak maximum principle]
{A general form of the weak maximum principle and some applications}

\author{Guglielmo Albanese}
\address{Dipartimento di Matematica, Universit\`{a} degli Studi di Milano, Via Saldini 50, I-20133, Milano, Italy}
\email{guglielmo.albanese@studenti.unimi.it}

\author{Luis J. Al\'\i as}
\address{Departamento de Matem\'{a}ticas, Universidad de Murcia, E-30100 Espinardo, Murcia, Spain}
\email{ljalias@um.es}
\thanks{This work was partially supported by MICINN/FEDER project MTM2009-10418 and Fundaci\'{o}n S\'{e}neca
project 04540/GERM/06, Spain. This research is a result of the
activity developed within the framework of the Programme in Support
of Excellence Groups of the Regi\'{o}n de Murcia, Spain, by
Fundaci\'{o}n S\'{e}neca, Regional Agency for Science and Technology
(Regional Plan for Science and Technology 2007-2010).}

\author{Marco Rigoli}
\address{Dipartimento di Matematica, Universit\`{a} degli Studi di Milano, Via Saldini 50, I-20133, Milano, Italy}
\email{marco.rigoli@unimi.it}
\thanks{M. Rigoli was partially supported by MEC Grant SAB2010-0073}

\subjclass[2000]{58J05, 53C21}

\date{January 2012; revised September 2012}



\begin{abstract}
The aim of this paper is to introduce new forms of the weak and Omori-Yau maximum principles for linear operators, notably
for trace type operators, and show their usefulness, for instance, in the context of PDE's and in the theory of hypersurfaces.
In the final part of the paper we consider a large class of non-linear operators and we show that our previous results
can be appropriately generalized to this case.
\end{abstract}

\maketitle

\section{Introduction}
\label{s1} A well known result due to Omori \cite{Om} and Yau \cite{Ya,CY}, from now on the Omori-Yau maximum principle,
states that on a complete Riemannian manifold $\M$ with Ricci tensor bounded from below, for any function $\uin$ with
$u^*=\sup_Mu<+\infty$ there exists a sequence $\{x_k\}\subset M$ with the following properties
\beq
\label{1.1}
\text{a) } \, u(x_k)>u^*-\frac{1}{k}, \quad \text{ b) } \, \Delta u(x_k)<\frac{1}{k}, \quad \text{ and c) } \,
|\nabla u|(x_k)<\frac{1}{k}
\eeq
for eack $k\in\mathbb{N}$.

In 2002, Pigola, Rigoli and Setti \cite{PRS1} introduced what has been called the weak maximum principle with the
following definition: we say that the weak maximum principle holds on a Riemannian manifold $\M$ if for any function
$\uin$ with $u^*=\sup_Mu<+\infty$ there exists a sequence $\{x_k\}\subset M$ with the properties a) and b) in \rf{1.1}.

This seemingly simple minded definition is in fact deep: it turns out to be equivalent to the stochastic completeness of
the Riemannian manifold $\M$ as shown in \cite{PRS1}. This latter concept does not require the manifold to be
complete from the Riemannian point of view and a simple useful condition to guarantee stochastic completeness is given by
the Khas'minski{\u\i} test \cite{Ka}, that is, by the existence of a function $\gamma\in\mathcal{C}^2(M)$ such that
\beq
\label{1.2}
\begin{cases}
\text{i)} \quad \gamma(x)\rightarrow +\infty & \hbox{as $x\rightarrow\infty$,} \\
\text{ii)} \quad \Delta\gamma\leq\lambda\gamma & \hbox{outside a compact subset of $M$}
\end{cases}
\eeq
for some positive constant $\lambda>0$.

Thus, we do not necessarily require any curvature conditions to guarantee the applicability of the
principle. This observation applies to the Omori-Yau maximum principle too, as shown in Theorem 1.9 of \cite{PRS2}.
We remark that, very recently, the sufficient condition for stochastic completeness given by the Khas'minski{\u\i} test
has been shown to be in fact also necessary \cite{MV}.

This approach, based on the existence of some auxiliary function satisfying appropriate conditions, has revealed to be of
great versatility in geometric applications; for instance, in the geometry of submanifolds
\cite{AD1,ABD,AD2,AIR1,AIR2} and in the study of soliton structures \cite{FG,MRR,PRRS}.

The purpose of this paper is to prove a weak maximum principle (Theorem A), an Omori-Yau type maximum principle
(Theorem B) and further related results for a large class of linear differential operators of geometrical interest.

From now on $\M$ will denote a connected, Riemannian manifold of dimension $m\geq 2$.
To describe our first result let $T$ be a symmetric positive semi-definite $(2,0)$-tensor field on $M$ and $X$ a vector field. We set
$L=L_{T,X}$ to denote the differential operator acting on $\uin$ by
\beq
Lu=\div (T(\nabla u,\ )^\sharp)-\g{X}{\nabla u}
=\tr(T\circ\Hess(u))+\div T(\nabla u)-\g{X}{\nabla u}
\eeq
where $\ ^{\sharp}:T^{*}M\rightarrow TM$ is the musical isomorphism. For instance if $T=\g{\ }{\ }$ and $X$ is a vector field
on $M$ for $\uin$ we have
\beq
Lu=\Delta u-\g{X}{\nabla u}
\eeq
and $L$ coincides with the $X-$Laplacian, denoted by $\Delta_X$, used in the study of general soliton structures,
\cite{MRR}; in particular if $X=\nabla f$ then $L=\Delta_f$ is the $f-$Laplacian, appearing also as the natural
symmetric diffusion operator in the study of the weighted Riemannian manifold $(M,\g{}{},\e^{-f}d vol)$, \cite{GY}.
On the other hand, if $T$ is as above and $X=(\div T)^{\sharp}$, then for $\uin$, $Lu$ reduces to
\beq
Lu=\tr(T\circ\Hess(u))
\eeq
and it is a typical trace operator.
\begin{thmA}
\label{thmA}
Let $\M$ be a Riemannian manifold and $L=L_{T,X}$ as above. Let $q(x)\in\mathcal{C}^0(M)$, $q(x)\geq0$ and suppose that
\beq
\label{e1.6}
q(x)>0 \ \hbox{outside a compact set.}
\eeq
Let $\gamma\in\mathcal{C}^2(M)$ be such that
\beq\label{Gam}
\tag{$\Gamma$}
\begin{cases}
\text{i)} \quad \gamma(x)\rightarrow +\infty & \hbox{as $x\rightarrow\infty$,} \\
\text{ii)} \quad q(x)L\gamma(x)\leq B & \hbox{outside a compact set}
\end{cases}
\eeq
for some constant $B>0$. If $\uin$ and $u^*<+\infty$, then there exists a sequence $\left\{x_k\right\}\subset M$ with the properties
\beq\label{woy}
\text{a) } \, u(x_k)>u^*-\frac{1}{k}, \quad  \text{ and b) } \, q(x_k)Lu(x_k)<\frac{1}{k}
\eeq
for each  $k\in\mathbb{N}$.
\end{thmA}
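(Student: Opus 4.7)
The plan is to follow the classical Khas'minski\u{\i}-type perturbation scheme: deform $u$ by a small multiple of the auxiliary function $\gamma$, use the properness of $\gamma$ to place the supremum of the deformation at an interior point, and at that point exploit the positive semi-definiteness of $T$ to compare $Lu$ with $L\gamma$. Preliminarily, if $u^{*}$ is attained at some $x_{0}\in M$, then $\nabla u(x_{0})=0$ and $\Hess u(x_{0})\leq 0$, so $\tr(T\circ\Hess u)(x_{0})\leq 0$, the summand $\div T(\nabla u)$ vanishes, and $\g{X}{\nabla u}(x_{0})=0$; hence $Lu(x_{0})\leq 0$ and the constant sequence $x_{k}\equiv x_{0}$ satisfies \eqref{woy}. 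Moreover, since $\gamma$ is proper by (i) it attains its infimum, so by adding a constant (which affects neither (i) nor (ii)) I may assume $\gamma\geq 1$ on $M$.

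Assume henceforth that $u^{*}$ is not attained. For each $\varepsilon>0$ form
\[
u_{\varepsilon}:=u-\varepsilon\gamma,
\]
which tends to $-\infty$ at infinity by (i) and the boundedness of $u$; being continuous, it therefore attains its supremum at some point $x_{\varepsilon}\in M$. At $x_{\varepsilon}$ one has $\nabla u(x_{\varepsilon})=\varepsilon\nabla\gamma(x_{\varepsilon})$ and $\Hess u(x_{\varepsilon})\leq\varepsilon\Hess\gamma(x_{\varepsilon})$; tracing the latter against the positive semi-definite tensor $T$ and combining with the first-order part of $L$ (which depends linearly on $\nabla u$) yields
\[
Lu(x_{\varepsilon})\leq\varepsilon\,L\gamma(x_{\varepsilon}),
\]
so that $q(x_{\varepsilon})Lu(x_{\varepsilon})\leq\varepsilon\,q(x_{\varepsilon})L\gamma(x_{\varepsilon})$ after multiplication by $q(x_{\varepsilon})\geq 0$.

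The main obstacle I expect is showing that $x_{\varepsilon}$ leaves every fixed compact set as $\varepsilon\to 0^{+}$, so that condition (ii) of ($\Gamma$) is available at $x_{\varepsilon}$. For this I plan to use the inequality $u(x_{\varepsilon})\geq u(y)-\varepsilon\gamma(y)+\varepsilon\gamma(x_{\varepsilon})\geq u(y)-\varepsilon\gamma(y)$; choosing $y\in M$ with $u(y)$ arbitrarily close to $u^{*}$ then forces $u(x_{\varepsilon})\to u^{*}$ as $\varepsilon\to 0$. Were $\{x_{\varepsilon_{n}}\}$ to remain in a compact set $K$ along some $\varepsilon_{n}\downarrow 0$, a subsequential limit $x_{0}\in K$ would satisfy $u(x_{0})=u^{*}$, contradicting non-attainment. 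Consequently, for $\varepsilon$ small enough $x_{\varepsilon}$ sits outside the compact set in (ii), so $q(x_{\varepsilon})L\gamma(x_{\varepsilon})\leq B$ and therefore $q(x_{\varepsilon})Lu(x_{\varepsilon})\leq\varepsilon B$. Picking $\varepsilon_{k}\downarrow 0$ with $\varepsilon_{k}B<1/k$ and small enough that also $u(x_{\varepsilon_{k}})>u^{*}-1/k$, the sequence $x_{k}:=x_{\varepsilon_{k}}$ fulfils the two properties in \eqref{woy}.
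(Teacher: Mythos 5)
Your proposal is correct. The mechanism is the same as the paper's: perturb $u$ by a small multiple of the Khas'minski{\u\i} function $\gamma$, locate a maximum of the perturbed function using properness of $\gamma$, and at that maximum exploit $\nabla(u-\varepsilon\gamma)=0$, $\Hess(u-\varepsilon\gamma)\leq 0$ and the positive semi-definiteness of $T$ to obtain $Lu\leq\varepsilon L\gamma$, hence $qLu\leq\varepsilon qL\gamma\leq\varepsilon B$. The difference is in the packaging. The paper argues by contradiction: it assumes $q\,Lu\geq\sigma_0>0$ on $A_\eta$, introduces the shifted barrier $\gamma_\sigma=\alpha+\sigma(\gamma-T_1)$, and carefully selects $T_1<T_2<T_3$, $\alpha$, $\bar\eta$, $\sigma$ so that $u-\gamma_\sigma$ has a positive interior maximum in a compact annulus lying inside $A_\eta\cap\Omega_{T_1}$, producing the contradiction $\sigma_0\leq\sigma B<\sigma_0$. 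You instead argue directly by maximizing $u-\varepsilon\gamma$ over all of $M$ (after normalizing $\gamma\geq 1$, which makes $u-\varepsilon\gamma$ bounded above), and replace the parameter bookkeeping by a compactness/escape lemma: $u(x_\varepsilon)\to u^*$ forces $x_\varepsilon$ to leave every compact set because $u^*$ is not attained. This is cleaner and buys the same conclusion; the paper's contradiction format, on the other hand, is the one that generalizes more directly to the versions with $A_\eta$-type statements (Theorems \^{A}, A', A'') and to the weak/non-$\mathcal{C}^2$ setting where a global maximizer may not exist pointwise. One small remark: your argument never actually needs $q(x_\varepsilon)>0$, only that $(\Gamma)$ ii) holds at $x_\varepsilon$, since multiplying $Lu(x_\varepsilon)\leq\varepsilon L\gamma(x_\varepsilon)$ by $q(x_\varepsilon)\geq 0$ already preserves the inequality; this matches the fact that in the paper hypothesis \eqref{e1.6} is also not really used in the proof of Theorem A itself.
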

If the conclusion of the theorem holds on $\M$ we shall say that the $q-$weak maximum principle for the operator $L$ holds on $\M$. If $q\equiv 1$
we shall say that the weak maximum principle for the operator $L$ holds on $\M$. Obviously, if the $q$-weak maximum principle
holds for $L$ and $0\leq \hat{q}(x)\leq q(x)$, $\hat{q}(x)$ satisfying \rf{e1.6}, then the $\hat{q}$-weak maximum
principle for the operator $L$ also holds.

Note that, if $T=p(x)\g{\ }{\ }$ for some $p\in\mathcal{C}^1(M)$, $p>0$ on $M$, and $X\equiv 0$, then $q(x)L$ is
(at least on the set where $q$ is positive) a typical (non symmetric) diffusion operator.

We stress that the Riemannian manifold $M$ is not assumed to be (geodesically) complete. This matches with the fact that for
$L=\Delta$ and $q(x)\equiv 1$, condition \rf{Gam} i), ii) (see also Remark \rf{rmkA}) is exactly the Khas'minski{\u\i}
condition that we have mentioned above.

\begin{rmk}\label{rmkA}
As we shall show below, condition ii) in (\ref{Gam}) can be substituted, for instance, by
\beq\label{Gam'}
\tag{$\Gamma$}
\begin{array}{ccc}
\text{ii)}' \quad q(x)L\gamma(x)\leq G(\gamma(x)) & {} & \hbox{outside a compact subset of $M$}
\end{array}
\eeq
where $G\in\mathcal{C}^1(\mathbb{R}^+)$ is non negative and satisfies
\beq\label{G1}
\begin{array}{ccc}
\text{ i) }\quad\frac{1}{G}\notin L^1(+\infty); & {} & \text{ii)}\quad G^{\prime}(t)\geq-A(\log t +1),
\end{array}
\eeq
for $t>>1$ and some constant $A\geq 0$. For instance, the functions $G(t)=t$, $G(t)=t\log t$, $t>>1$, $G(t)=t\log t\log\log t$, $t>>1$, and so on,
satisfy i) and (ii) in (\ref{G1}) with $A=0$.

It seems worth to underline the following fact. In \cite{PRS1} the third author, jointly with Pigola and Setti, proved
that the weak maximum principle for $\Delta$ is equivalent to the stochastic completeness of the manifold $M$ via the known characterization
(see Grigor'yan \cite{Gr} or \cite{PRS2}) that $(M,\g{}{})$ is stochastically complete if and only if for each
$\lambda>0$ the only non-negative bounded solution of $\Delta u=\lambda u$ is $u\equiv 0$. The work of Mari and Valtorta \cite{MV} shows
that the weak maximum principle implies the existence of a function $\gamma$ satisfying Khas'minski{\u\i} criterion \rf{1.2}. This latter classically
implies stochastic completeness (see \cite{PRS2} for a simple proof using the equivalence mentioned above). Theorem A
above provides a direct proof of the weak maximum principle starting from Kash'minski test.
\end{rmk}

The ''Omori-Yau'' type version of Theorem A is as follows.
\begin{thmB}
Let $\M$ be a Riemannian manifold and $L$ as above. Let $q(x)\in\mathcal{C}^0(M)$, $q(x)\geq 0$ and suppose
\beq
\begin{array}{ccc}
q(x)>0& {} & \hbox{outside a compact set}.
\end{array}
\eeq
Let $\gamma\in\mathcal{C}^2(M)$ be such that
\beq
\label{GamB}
\tag{$\Gamma_B$}
\begin{cases}
\text{i)} \quad \gamma(x)\rightarrow +\infty & \hbox{as $x\rightarrow\infty$,} \\
\text{ii)} \quad q(x)L\gamma\leq B & \hbox{outside a compact subset of $M$,} \\
\text{iii)} \quad |\nabla\gamma|\leq A & \hbox{outside a compact subset of $M$}
\end{cases}
\eeq
for some constants $A,B>0$. If $\uin$ and $u^*<+\infty$ then there exists a sequence $\left\{x_k\right\}\subset M$ with the properties
\beq\label{oy}
\text{a) } \, u(x_k)>u^*-\frac{1}{k}, \quad \text{ b) } \, q(x_k)Lu(x_k)<\frac{1}{k}, \quad \text{ and c) } \,
|\nabla u(x_k)|<\frac{1}{k}
\eeq
for each  $k\in\mathbb{N}$.
\end{thmB}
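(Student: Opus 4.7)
The plan is to adapt the classical Omori--Yau perturbation trick, studying $F_{\varepsilon}:=u-\varepsilon\gamma$ for small $\varepsilon>0$ and exploiting the new hypothesis $|\nabla\gamma|\leq A$ to extract the gradient estimate c).

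First I would dispose of the trivial case in which $u^{*}$ is attained at some $x^{*}\in M$: there $\nabla u(x^{*})=0$ and $\Hess u(x^{*})\leq 0$, and since the first-order terms of $L$ vanish, $Lu(x^{*})=\tr(T\circ\Hess u)(x^{*})\leq 0$ by positive semidefiniteness of $T$, so the constant sequence $x_{k}\equiv x^{*}$ satisfies \rf{oy}. Henceforth assume $u$ does not attain $u^{*}$.

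For the nontrivial case, I observe that condition i) of \rf{GamB} makes $\{\gamma\leq c\}$ relatively compact for every $c\in\mathbb{R}$, and since $u\leq u^{*}$, each superlevel set of $F_{\varepsilon}$ is relatively compact. Hence $F_{\varepsilon}$ attains its supremum at some interior $p_{\varepsilon}\in M$. The first derivative test yields $\nabla u(p_{\varepsilon})=\varepsilon\nabla\gamma(p_{\varepsilon})$, so the first-order contributions to $LF_{\varepsilon}(p_{\varepsilon})$ vanish; the second derivative test, combined with $T\geq 0$, gives $\tr(T\circ\Hess F_{\varepsilon})(p_{\varepsilon})\leq 0$. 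Therefore $Lu(p_{\varepsilon})\leq\varepsilon L\gamma(p_{\varepsilon})$. Moreover, for any fixed $y\in M$ the maximum property gives $u(p_{\varepsilon})\geq u(y)-\varepsilon\gamma(y)$, so choosing $y_{n}$ with $u(y_{n})\to u^{*}$ and then $\varepsilon$ small compared to $\gamma(y_{n})$ yields $u(p_{\varepsilon})\to u^{*}$ as $\varepsilon\to 0^{+}$. Since $u^{*}$ is not attained, every compact subset of $M$ has strict sup below $u^{*}$, so $p_{\varepsilon}$ must leave every compact set; in particular, eventually $p_{\varepsilon}$ lies outside the exceptional compact where ii) and iii) might fail, and there $|\nabla u(p_{\varepsilon})|\leq\varepsilon A$ and $q(p_{\varepsilon})Lu(p_{\varepsilon})\leq\varepsilon B$. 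Extracting a sequence $\varepsilon_{k}\to 0$ chosen small enough (relative to the $y_{k}$ used to verify $u(p_{\varepsilon_{k}})>u^{*}-1/k$ and to make $\varepsilon_{k}\max(A,B)<1/k$) and setting $x_{k}:=p_{\varepsilon_{k}}$ yields the desired sequence.

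The main obstacle is that ii) and iii) only hold outside a compact set, so one cannot a priori invoke them at $p_{\varepsilon}$; the resolution is that $u(p_{\varepsilon})\to u^{*}$ forces $p_{\varepsilon}$ to escape to infinity in the non-attained case, placing it in the region where the bounds apply. The gain over Theorem~A is solely the gradient bound c), and it comes directly from the relation $\nabla u(p_{\varepsilon})=\varepsilon\nabla\gamma(p_{\varepsilon})$ combined with hypothesis iii); the argument does not require Theorem~A as a black box, only its underlying perturbative philosophy, enhanced by the new a priori control on $|\nabla\gamma|$.
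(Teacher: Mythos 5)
Your proof is correct, and it takes a genuinely different route from the paper's. The paper proves Theorem B by contradiction: it first shows that (GamB)~i) and iii) force geodesic completeness, invokes Ekeland's variational principle to guarantee $B_\eta\neq\emptyset$, assumes $q\,Lu\geq\sigma_0>0$ on $B_\eta$, then (following the Theorem A machinery) constructs the barrier $\gamma_\sigma=\alpha+\sigma(\gamma-T_1)$, locates a positive absolute maximum $z_0$ of $u-\gamma_\sigma$ in a carefully chosen annulus $\overline{\Omega}_{T_1}\setminus\Omega_{T_3}$, verifies $z_0\in B_\eta\cap\Omega_{T_1}$, and derives $\sigma_0\leq q(z_0)Lu(z_0)\leq q(z_0)L\gamma_\sigma(z_0)\leq\sigma B<\sigma_0$. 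You instead run the classical Omori--Yau perturbation directly: set $F_\varepsilon=u-\varepsilon\gamma$, observe that (GamB)~i) alone makes the superlevel sets of $F_\varepsilon$ relatively compact so the supremum is attained at some $p_\varepsilon$, read off $\nabla u(p_\varepsilon)=\varepsilon\nabla\gamma(p_\varepsilon)$ and $Lu(p_\varepsilon)\leq\varepsilon L\gamma(p_\varepsilon)$ from the first- and second-derivative tests (the first-order part of $L$ drops out because $\nabla F_\varepsilon(p_\varepsilon)=0$, and $\tr(T\circ\Hess F_\varepsilon)(p_\varepsilon)\leq0$ since $T\geq0$), show $u(p_\varepsilon)\to u^*$, conclude $p_\varepsilon$ must leave every compact set when $u^*$ is not attained, and then apply (GamB)~ii), iii) at $p_\varepsilon$ to get $q(p_\varepsilon)Lu(p_\varepsilon)\leq\varepsilon B$ and $|\nabla u(p_\varepsilon)|\leq\varepsilon A$. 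The notable gain of your approach is that it avoids both the contradiction framework and the appeal to Ekeland (and hence the preliminary completeness argument): coercivity of $\gamma$ already furnishes the maximum points directly. What the paper's more elaborate contradiction argument buys is uniformity: the same scheme is used verbatim for Theorem A (where no gradient bound is available, so one cannot perturb by $\gamma$ and still land inside the good region), for Theorem~\^{A} (where $u^*=+\infty$), and it ports to the weak $\mathcal{C}^1$/nonlinear setting (Theorems A'', B'') where the pointwise second-derivative test is unavailable and a comparison principle must replace it. For the $\mathcal{C}^2$ linear statement at hand, your shorter argument is entirely adequate.
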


\begin{rmk}\label{rmkB}
In this case conditions ii) and iii) in (\ref{GamB}) can be substituted by the apparently weaker request
\beq
\label{GamB'}
\tag{$\Gamma_B$}
\begin{cases}
\text{ii)}' \quad q(x)L\gamma\leq G(\gamma) \\
\text{iii)}' \quad |\nabla\gamma|\leq G(\gamma)
\end{cases}
\eeq
outside a compact subset of $M$, where $G\in\mathcal{C}^1(R_0^+)$ is a positive function satisfying $(\ref{G1})$ $i)$, $ii)$.
\end{rmk}

We observe that when $\M$ is a complete, non-compact Riemannian manifold a special candidate for $\gamma$, in both Theorems A and B,
is the distance function $r(x)$ from a fixed
origin $o\in M$. Of course $r(x)$ is smooth only outside $\left\{o\right\}\cup\cut(o)$, where $\cut(o)$ is the cut locus
of $o$, but, as we shall show at the end of the proof of Theorem B, this problem can be bypassed using an old trick of
Calabi \cite{Ca}. Needless to say, the inequalities involving $r(x)$ and the operator $L$ have to be understood in the
weak-Lip sense. We underline that the arguments we shall give below, via a comparison principle, also shows that if
$\gamma\in\mathcal{C}^1(M)$ satisfies (\ref{GamB}) i), iii), and is a classical weak solution of (\ref{GamB}) ii),
then Theorem B is still valid. The same, of course, applies to Theorem A and to the regularity of $u$ (but in this
latter case with the further assumption $1/q\in L^1_{loc}(M)$ and the application of Theorem 5.6 of \cite{PuRS}
when proving that $u^*$ is not attained on $M$; see the proof of Theorem A'').

On the other hand, given $T$ and $X$ as above we introduce the operator $H=H_{T,X}$ acting on $\mathcal{C}^2(M)$ by
\[
Hu=H_{T,X}u=T(\mathrm{hess}(u)\cdot,\cdot)+(\mathrm{div}T-X^\flat)\otimes du.
\]
Observe that $Lu=\mathrm{tr}(Hu)$. Then, the above Theorems admit the following general versions.
\begin{thmA'}
\label{thmA'}
Let $\M$ be a Riemannian manifold and $H=H_{T,X}$ be as above. Let $q(x)\in\mathcal{C}^0(M)$, $q(x)\geq0$ and suppose that
\beq
q(x)>0 \ \hbox{outside a compact set.}
\eeq
Let $\gamma\in\mathcal{C}^2(M)$ be such that
\beq\label{GamC}
\tag{$\Gamma_C$}
\begin{cases}
\text{i)} \quad \gamma(x)\rightarrow +\infty \quad \hbox{ as $x\rightarrow\infty$,} \\
\text{ii)} \quad q(x)H\gamma(x)(v,v)\leq B|v|^2
\end{cases}
\eeq
for some constant $B>0$ and for every $x\in M\setminus K$, for some compact $K\subset M$, and for every $v\in T_xM$.
If $\uin$ and $u^*<+\infty$, then there exists a sequence $\left\{x_k\right\}\subset M$ with the properties
\beq\label{woy2}
\text{i) } \, u(x_k)>u^*-\frac{1}{k}, \quad  \text{ and ii) } \, q(x_k)Hu(x_k)(v,v)<\frac{1}{k}|v|^2
\eeq
for each  $k\in\mathbb{N}$ and every $v\in T_{x_k}M, v\neq 0$.
\end{thmA'}

\begin{thmB'}
\label{thmB'}
Let $\M$ be a Riemannian manifold and $H=H_{T,X}$ be as above. Let $q(x)\in\mathcal{C}^0(M)$, $q(x)\geq0$ and suppose that
\beq
q(x)>0 \ \hbox{outside a compact set.}
\eeq
Let $\gamma\in\mathcal{C}^2(M)$ be such that
\beq\label{GamD}
\tag{$\Gamma_D$}
\begin{cases}
\text{i)} \quad \gamma(x)\rightarrow +\infty \quad \hbox{as $x\rightarrow\infty$,} \\
\text{ii)} \quad q(x)H\gamma(x)(v,v)\leq B|v|^2, \\
\text{iii)} \quad |\nabla\gamma(x)|\leq A \\
\end{cases}
\eeq
for some constants $A, B>0$, for every $x\in M\setminus K$, for some compact $K\subset M$, and for every $v\in T_xM$.
If $\uin$ and $u^*<+\infty$, then there exists a sequence $\left\{x_k\right\}\subset M$ with the properties
\beq\label{woy3}
\text{i) } \, u(x_k)>u^*-\frac{1}{k}, \quad  \text{ii) } \, q(x_k)Hu(x_k)(v,v)<\frac{1}{k}|v|^2, \quad \text{and } \,
|\nabla u(x_k)|<\frac{1}{k}
\eeq
for each  $k\in\mathbb{N}$ and every $v\in T_{x_k}M, v\neq 0$.
\end{thmB'}

In section \ref{nonlinearcase} below we generalize Theorems A and B to a large class of non-linear operators
containing, for instance, the $p$-Laplacian, with $p>1$, the mean curvature operator and so on. Of course Theorems
A' and B' admit similar generalizations to the non-linear case for $\mathcal{C}^2$-solutions. We leave the interested
reader to state the results and provide her/his own proofs following arguments similar to those of Theorems A'' and B''.

\section{Proof of Theorem A and related results}
In this section we give the proof of Theorems A and of some companion results.
\begin{proof}[Proof of Theorem A]
We fix $\eta>0$ and let
\beq\label{aeta}
A_{\eta}=\left\{x\in M:\ u(x)>u^*-\eta\right\}.
\eeq
We claim that
\beq\label{cla1}
\inf_{A_{\eta}}\left\{q(x)Lu(x)\right\}\leq0.
\eeq
Note that (\ref{cla1}) is equivalent to conclusion (\ref{woy}) of Theorem A.

We reason by contradiction and we suppose that
\beq\label{contr1}
q(x)Lu(x)\geq\sigma_0>0 \quad \hbox{on $A_{\eta}$}.
\eeq
First we observe that $u^*$ cannot be attained at any point $x_0\in M$, for otherwise $x_0\in A_{\eta}$, $\nabla u(x_0)=0$, and
$Lu(x_0)$ reduces to $Lu(x_0)=\tr(T\circ\Hess(u))(x_0)$, so that, since $T$ is positive semi-definite, $q(x_0)Lu(x_0)\leq0$ contradicting (\ref{contr1}).

Next we let
\beq
\Omega_t=\left\{x\in M:\ \gamma(x)>t\right\},
\eeq
and define
\beq
u^*_t=\sup_{x\in\Omega^c_t}u(x).
\eeq
Clearly $\Omega^{c}_t$ is closed; we show that it is also compact. In fact, by (\ref{Gam}) $i)$ there exists a compact
set $K_t$ such that $\gamma(x)>t$ for every $x\notin K_t$. In other words, $\Omega^c_t\subset K_t$ and hence it is also
compact. In particular, $u^*_t=\max_{x\in\Omega^c_t}u(x)$.

Since $u^*$ is not attained in $M$ and $\left\{\Omega^c_t\right\}$ is a nested family exhausting $M$, we find a divergent
sequence $\left\{t_j\right\}\subset\mathbb{R}_0^+$ such that
\beq\label{w(t)}
u^*_{t_j}\rightarrow u^* \quad \hbox{as $j\rightarrow+\infty$},
\eeq
and we can choose $T_1>0$ sufficiently large in such a way that
\beq
\label{2.7}
u^*_{T_1}>u^*-\frac{\eta}{2}.
\eeq
Furthermore we can suppose to have chosen $T_1$ sufficiently large that $q(x)>0$ and
(\ref{Gam}) $ii)$ holds on $\Omega_{T_1}$. We choose $\alpha$ such that $u_{T_1}^*<\alpha<u^*$. Because of (\ref{w(t)}) we can find $j$ sufficiently large that
\beq
T_2=t_j>T_1 \quad \hbox{and} \quad u^*_{T_2}>\alpha.
\eeq
We select $\overline{\eta}>0$ small enough that
\beq\label{alphaeta}
\alpha+\overline{\eta}<u^*_{T_2}.
\eeq

For $\sigma\in(0,\sigma_0)$ we define
\beq\label{gammasigma}
\gamma_{\sigma}(x)=\alpha+\sigma(\gamma-T_1).
\eeq
We note that
\beq
\gamma_{\sigma}(x)=\alpha \quad \hbox{for every $x\in\partial\Omega_{T_1}$},
\eeq
and
\beq
\label{luis3}
q(x)L\gamma_{\sigma}(x)=\sigma q(x)L\gamma(x)\leq\sigma B<\sigma_0 \quad \hbox{on $\Omega_{T_1}$},
\eeq
up to have chosen $\sigma$ sufficiently small.

Since on $\Omega_{T_1}\setminus\Omega_{T_2}$ we have
\beq
\alpha\leq\gamma_{\sigma}(x)\leq\alpha+\sigma(T_2-T_1)
\eeq
we can choose $\sigma\in(0,\sigma_0)$ sufficiently small, so that
\beq
\label{luis1}
\sigma(T_2-T_1)<\overline{\eta}
\eeq
and then
\beq
\alpha\leq\gamma_{\sigma}(x)<\alpha+\overline{\eta} \quad \hbox{on $\quad \Omega_{T_1}\setminus\Omega_{T_2}$}.
\eeq
For any such $\sigma$, we have on $\partial\Omega_{T_1}$
\beq
\gamma_{\sigma}(x)=\alpha>u^*_{T_1}\geq u(x),
\eeq
so that
\beq
\label{luis2}
(u-\gamma_{\sigma})(x)<0 \quad \hbox{on $\partial\Omega_{T_1}$}.
\eeq
Furthermore, if $\overline{x}\in\Omega_{T_1}\setminus\Omega_{T_2}$ is such that
\[
u(\overline{x})=u^*_{T_2}>\alpha+\overline{\eta}
\]
then
\[
(u-\gamma_{\sigma})(\overline{x})
\geq u^*_{T_2}-\alpha-\sigma(T_2-T_1)>u^*_{T_2}-\alpha-\overline{\eta}>0
\]
by (\ref{alphaeta}) and \rf{luis1}. Finally, (\ref{Gam}) $i)$ and the fact that $u^*<+\infty$ imply
\beq
\label{2.18}
(u-\gamma_{\sigma})(x)<0 \quad \hbox{on $\Omega_{T_3}$}
\eeq
for $T_3>T_2$ sufficiently large. Therefore,
\[
m=\sup_{x\in\overline{\Omega}_{T_1}}(u-\gamma_\sigma)(x)>0,
\]
and it is in fact a
positive maximum attained at a certain point $z_0$ in the compact set
$\overline{\Omega}_{T_1}\setminus\Omega_{T_3}$. In particular, $\nabla (u-\gamma_\sigma)(z_0)=0$ and
$L(u-\gamma_\sigma)(z_0)$ reduces to $\tr(T\circ\Hess(u-\gamma_\sigma))(z_0)$. Therefore, since $T$ is positive
semi-definite we have that $Lu(z_0)\leq L\gamma_\sigma(z_0)$.

By \rf{luis2} we know that  $\gamma(z_0)>T_1$. Therefore, at $z_0$ we have
\beq
u(z_0)=\gamma_{\sigma}(z_0)+m>\gamma_{\sigma}(z_0)>\alpha>u^*_{T_1}>u^*-\frac{\eta}{2},
\eeq
and hence $z_0\in A_{\eta}\cap\Omega_{T_1}$. In particular $q(z_0)>0$ and (\ref{Gam}) $ii)$ holds at $z_0$.
From (\ref{contr1}) we have
\beq
0<\sigma_0\leq q(z_0)Lu(z_0)\leq q(z_0)L\gamma_{\sigma}(z_0)\leq\sigma B<\sigma_0,
\eeq
which is a contradiction.
\end{proof}

We observe that we can relax the assumption in Theorem A on the boundedness of the function $u$ from above to a control
of $u$ at infinity via the function $\gamma$. This is the content of the next result.
\begin{thmAbis}
\label{thmAbis}
Let $\M$ be a Riemannian manifold and $L=L_{T,X}$ be as above. Let $q(x)\in\mathcal{C}^0(M)$, $q(x)\geq0$ and suppose that
\beq
\label{e1.6b}
q(x)>0 \ \hbox{outside a compact set.}
\eeq
Let $\gamma\in\mathcal{C}^2(M)$ be such that
\beq
\tag{$\Gamma$}
\begin{cases}
\text{i)} \quad \gamma(x)\rightarrow +\infty & \hbox{as $x\rightarrow\infty$,} \\
\text{ii)} \quad q(x)L\gamma(x)\leq B & \hbox{outside a compact set}
\end{cases}
\eeq
for some constant $B>0$. If $\uin$ and
\beq
\label{eqAbis}
u(x)=o(\gamma(x)) \ \hbox{as $x\rightarrow\infty$,}
\eeq
then for each $\mu$ such that
\[
A_\mu=\{x\in M : u(x)>\mu \}\neq\emptyset
\]
we have
\[
\inf_{A\mu}\{ q(x)Lu(x)\}\leq 0.
\]
\end{thmAbis}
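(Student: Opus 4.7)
The plan is to deduce Theorem \^{A} from Theorem A by applying the latter to the modified function $v=u-\epsilon\gamma$ for a small parameter $\epsilon>0$. The growth hypothesis $u(x)=o(\gamma(x))$ is designed precisely so that $v$ becomes bounded above, while the linearity of $L=L_{T,X}$ ensures the barrier condition $(\Gamma)$ is unaffected by the perturbation.

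Since $\gamma\to+\infty$ at infinity, $\gamma$ is bounded below on $M$; as $L$ annihilates constants, I may replace $\gamma$ by $\gamma+C$ for a suitable $C\geq 0$ and assume $\gamma\geq 0$ on $M$ without affecting $(\Gamma)$. Fix $\mu$ with $A_\mu\neq\emptyset$, pick $x_0\in A_\mu$, and set $\delta_0=u(x_0)-\mu>0$. For any $\epsilon\in\bigl(0,\delta_0/(\gamma(x_0)+1)\bigr)$ define
\[
v=u-\epsilon\gamma\in\mathcal{C}^2(M).
\]
Assumption (\ref{eqAbis}) forces $v(x)\to-\infty$ as $x\to\infty$, so $v^*=\sup_M v<+\infty$, and by construction $v^*\geq v(x_0)>\mu$.

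Now Theorem A applies to $v$ with the same $q$ and $\gamma$ and yields a sequence $\{x_k\}\subset M$ with $v(x_k)>v^*-1/k$ and $q(x_k)Lv(x_k)<1/k$. For $k$ large enough that $1/k<v^*-\mu$, the bound $\gamma\geq 0$ gives
\[
u(x_k)=v(x_k)+\epsilon\gamma(x_k)\geq v(x_k)>v^*-\tfrac{1}{k}>\mu,
\]
so $x_k\in A_\mu$. By linearity of $L$ we have $Lu=Lv+\epsilon L\gamma$, hence
\[
q(x_k)Lu(x_k)<\tfrac{1}{k}+\epsilon\,q(x_k)L\gamma(x_k).
\]
The function $qL\gamma$ is continuous on $M$ and bounded above by $B$ outside a compact set, so it admits a global upper bound $B'\geq B$ on $M$, and therefore $q(x_k)Lu(x_k)<1/k+\epsilon B'$.

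Taking $k\to\infty$ yields $\inf_{A_\mu}q(x)Lu(x)\leq\epsilon B'$; letting $\epsilon\to 0^+$ then gives the desired conclusion $\inf_{A_\mu}q(x)Lu(x)\leq 0$. The only delicate point I anticipate is the bookkeeping that guarantees the sequence produced by Theorem A eventually lies in $A_\mu$, handled above by the reduction $\gamma\geq 0$ together with the choice of $\epsilon$ small relative to $\delta_0$; the rest is a direct exploitation of the linearity of $L$ and of the continuity of $qL\gamma$.
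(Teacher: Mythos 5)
Your proof is correct, and it takes a genuinely different route from the paper's. The paper treats the case $u^*<+\infty$ as already covered by Theorem A, and for $u^*=+\infty$ it reruns the contradiction argument of Theorem A almost verbatim: the only place where boundedness of $u$ is used is to deduce $(u-\gamma_\sigma)(x)<0$ on $\Omega_{T_3}$ for $T_3$ large, and there the hypothesis $u=o(\gamma)$ is invoked via the factorization $u-\gamma_\sigma=\gamma_\sigma\bigl(\tfrac{u}{\gamma_\sigma}-1\bigr)$. You instead reduce Theorem \^{A} cleanly to Theorem A as a black box: normalizing $\gamma\geq0$ (legitimate, since $\gamma\to+\infty$ forces $\gamma$ to be bounded below and $L$ annihilates constants), the auxiliary function $v=u-\epsilon\gamma$ has $v\to-\infty$ at infinity so $v^*<+\infty$, Theorem A applied to $v$ yields the sequence $\{x_k\}$, the inequality $u(x_k)\geq v(x_k)>v^*-\tfrac1k>\mu$ places $x_k\in A_\mu$ for $k$ large, and linearity gives $q(x_k)Lu(x_k)<\tfrac1k+\epsilon B'$ with $B'$ a global bound on the continuous function $qL\gamma$; sending $k\to\infty$ and then $\epsilon\to0^+$ finishes. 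Both arguments exploit the linearity of $L$ essentially (the paper through $L\gamma_\sigma=\sigma L\gamma$, you through $Lv=Lu-\epsilon L\gamma$), so neither has a robustness advantage in that respect. What your approach buys is economy and clarity: no need to reopen the barrier construction or to re-establish that the supremum is not attained, since all of that is delegated to Theorem A. What the paper's approach buys is that it keeps the barrier mechanism explicit, which parallels the later treatment of the non-linear operators in Section 6, where a reduction of this type would not be available.
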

\begin{proof}
Of course we consider here the case $u^*=+\infty$. We reason by contradiction as in the proof of Theorem A
and we suppose the validity of \rf{contr1} on $A_\mu$. Proceed as in the above proof (obviously in this case
$u^*$ is not attained on $M$) to arrive to \rf{w(t)} that now takes the form
\beq
\label{2.6bis}
u^*_{t_j}\rightarrow+\infty \ \hbox{as $j\rightarrow\infty$,}
\eeq
and choose $T_1>0$ sufficiently large in such a way that \rf{2.7} becomes now
\beq
\label{2.7bis}
u^*_{T_1}>2\mu.
\eeq
Furthermore we can suppose to have chosen $T_1$ sufficiently large that $q(x)>0$ and
(\ref{Gam}) $ii)$ holds on $\Omega_{T_1}$. We choose $\alpha$ such that $\alpha>u_{T_1}^*$. Because of
(\ref{2.6bis}) we can find $j$ sufficiently large that
\beq
\label{2.8bis}
T_2=t_j>T_1 \quad \hbox{and} \quad u^*_{T_2}>\alpha.
\eeq
Proceed now up to \rf{2.18} which is now true on $\Omega_{T_3}$ for $T_3$ sufficiently large since
\[
(u-\gamma_\sigma)(x)=\gamma_\sigma(\frac{u}{\gamma_\sigma}-1)(x),
\]
expression which becomes negative on $\Omega_{T_3}$, for $T_3$ sufficiently large, because of condition \rf{eqAbis}.

The rest of the proof is as in that of Theorem A.
\end{proof}

We now show the validity of Remark \ref{rmkA}. Thus we assume (\ref{Gam}) $ii)'$ with $G$ as in (\ref{G1}). We
set
\beq
\varphi(t)=\int^t_{t_0}\frac{ds}{G(s)+A\,s\log s}
\eeq
on $[t_0,+\infty)$ for some $t_0>0$. Note that, by (\ref{G1}) $i)$, $\varphi(t)\rightarrow+\infty$ as $t\rightarrow+\infty$. Thus, defining
$\widehat{\gamma}=\varphi(\gamma)$, (\ref{Gam}) $i)$ implies that
\beq\label{sigma1}
\widehat{\gamma}(x)\rightarrow+\infty \quad \hbox{as $x\rightarrow\infty$}.
\eeq
Next, using that
\[
L(\varphi(u))=\varphi'(u)Lu+\varphi''(u)T(\nabla u,\nabla u),
\]
a computation gives
\begin{eqnarray*}
q(x)L\widehat{\gamma}(x) & = & \frac{q(x)L\gamma(x)}{G(\gamma(x))+A\gamma(x)\log\gamma(x)}\\
{} & {} & -\frac{G^{\prime}(\gamma(x))+A(1+\log\gamma(x))}
{\left(G(\gamma(x))+A\gamma(x)\log\gamma(x)\right)^2}q(x)T(\nabla\gamma(x),\nabla\gamma(x))
\end{eqnarray*}
outside a sufficiently large compact set. Since $T(\nabla\gamma,\nabla\gamma)\geq0$, $q(x)\geq0$ and (\ref{G1}) $ii)$ holds, we deduce
\beq
q(x)L\widehat{\gamma}(x)\leq\frac{q(x)L\gamma(x)}{G(\gamma(x))+A\gamma(x)\log\gamma(x)}
\eeq
if $\gamma(x)$ is sufficiently large. Thus, from (\ref{Gam'}) $ii)'$ and $G\geq0$ we finally obtain
\beq\label{sigma2}
q(x)L\widehat{\gamma}(x)\leq B
\eeq
outside a compact set. Then (\ref{sigma1}) and (\ref{sigma2}) show the validity of (\ref{Gam}) $i)$, $ii)$ for
the function $\widehat{\gamma}$.

This finishes the proof of Remark \ref{rmkA}. Regarding Theorem \^{A}, if we substitute \rf{Gam} ii) with
\rf{Gam} $ii)'$, $G$ satisfying \rf{G1}, then condition \rf{eqAbis} has to be substituted by
\beq
\label{eqAbis'}
u(x)=o\left(\int_0^{\gamma(x)}\frac{ds}{G(s)+As\log{s}}\right) \ \hbox{as $x\rightarrow\infty$}.
\eeq
Thus for instance if $G(t)=t$, so that we can choose $A=0$, \rf{Gam} ii)' is $q(x)L\gamma(x)\leq\gamma(x)$
but \rf{eqAbis'} becomes $u(x)=o(\log\gamma(x))$ as $x\rightarrow\infty$, showing a balancing effect between
the two conditions.

\begin{proof}[Proof of Theorem A']
For the proof of Theorem A' we proceed as in the proof of Theorem A letting
\beq
\label{aeta'}
A_\eta=\{x\in M : u(x)>u^*-\eta \}.
\eeq
We claim that for every $\varepsilon>0$ there exists $x\in A_\eta$ such that
\[
q(x)Hu(x)(v,v)<\varepsilon
\]
for each $v\in T_xM$ with $|v|=1$. By contradiction, suppose that there exists $\sigma_0>0$ such that, for every $x\in A_\eta$ there
exists $\bar{v}\in T_xM$, $|\bar{v}|=1$, such that
\beq
\label{contr1'}
q(x)Hu(x)(\bar{v},\bar{v})\geq\sigma_0.
\eeq
Now we follow the argument of the proof of Theorem A up to equation \rf{luis3}, which is now substituted by
\beq
\label{luis3'}
q(x)H\gamma_\sigma(x)(\bar{v},\bar{v})=\sigma q(x)H\gamma(x)(\bar{v},\bar{v})\leq\sigma B<\sigma_0 \quad \text{on } \Omega_{T_1},
\eeq
up to have chosen $\sigma$ sufficiently small. We then proceed up to the existence of a certain point $z_0$ in the compact
set $\overline{\Omega}_{T_1}\setminus\Omega_{T_3}$ where the function $u-\gamma_\sigma$ attains its positive maximum.
In particular, $\nabla (u-\gamma_\sigma)(z_0)=0$ and $H(u-\gamma_\sigma)(z_0)$ reduces to
\[
H(u-\gamma_\sigma)(z_0)(v,v)=T(\mathrm{hess}(u-\gamma_\sigma)(z_0)v,v) \quad \text{ for every } v\in T_{z_0}M.
\]
Therefore, since $T$ is positive semi-definite we have
\[
Hu(z_0)(v,v)\leq H\gamma_\sigma(z_0)(v,v)
\]
for every $v\in T_{z_0}M$.

As in the proof of Theorem A, we have that $z_0\in A_{\eta}\cap\Omega_{T_1}$. In particular $q(z_0)>0$ and (\ref{Gam'}) $ii)'$
holds at $z_0$. On the other hand, from (\ref{contr1'}) we have
\beq
0<\sigma_0\leq q(z_0)Hu(z_0)(\bar{v},\bar{v})\leq q(z_0)H\gamma_{\sigma}(z_0)(\bar{v},\bar{v})\leq\sigma B<\sigma_0,
\eeq
which is a contradiction.
\end{proof}

\section{Proof of Theorem B and some related results}
We follow the notation of the previous section to give the proof of Theorem B.
\begin{proof}[Proof of Theorem B]
We first observe that, although it is not required in the statement, the two assumptions \rf{GamB} $i)$ and $iii)$ imply
that the manifold $M$ is geodesically complete. To see this, let $\varsigma:[0,\ell)\rightarrow M$ be any divergent
path parametrized by arc-length. Here by \textit{divergent} path we mean a path that eventually lies outside any
compact subset of $M$. From \rf{GamB} $iii)$  we have that $|\nabla\gamma|\leq A$ outside a compact subset $K$ of $M$. We set $h(t)=\gamma(\varsigma(t))$ on $[t_0,\ell)$, where $t_0$ has been
chosen so that $\varsigma(t)\notin K$ for all $t_0\leq t<\ell$. Then, for every $t\in[t_0,\ell)$ we have
\[
|h(t)-h(t_0)|=\left|\int_{t_0}^{t}h'(s)ds\right|
\leq\int_{t_0}^{t}|\nabla\gamma(\varsigma(s))|ds\leq A(t-t_0).
\]
Since $\varsigma$ is divergent, then $\varsigma(t)\rightarrow\infty$ as $t\rightarrow\ell^{-}$, so that
$h(t)\rightarrow+\infty$ as $t\rightarrow\ell^{-}$ because of assumption \rf{GamB} $i)$. Therefore, letting
$t\rightarrow\ell^{-}$ in the inequality above, we conclude that $\ell=+\infty$. This shows that divergent paths
in $M$ have infinite length. In other words, the metric on $M$ is complete.

As in the proof of Theorem A we fix $\eta>0$ but, instead of the set $A_{\eta}$ of (\ref{aeta}), we now consider
the set
\beq
B_{\eta}=\left\{x\in M:\ u(x)>u^*-\eta\ \hbox{and}\ |\nabla u(x)|<\eta\right\}.
\eeq
Since the manifold is complete, by applying Ekeland quasi-minimum principle (see for instance \cite{Ek}) we
deduce that $B_{\eta}\neq\emptyset$. We claim that
\beq\label{cla2}
\inf_{B_{\eta}}\left\{q(x)Lu(x)\right\}\leq0.
\eeq
Note that (\ref{cla2}) is equivalent to conclusion (\ref{oy}) of Theorem B. We reason by contradiction and suppose that
\beq
q(x)Lu(x)\geq\sigma_0>0 \quad \hbox{on $B_{\eta}$}.
\eeq
Now the proof follow the pattern of that of Theorem A with the choice of $T_1$, such that also (\ref{Gam}) $iii)$ holds
on $\Omega_{T_1}$. We observe that in this case
\beq
\gamma_{\sigma}(x)=\alpha \quad \hbox{for every $x\in\partial\Omega_{T_1}$},
\eeq
\beq
q(x)L\gamma_{\sigma}(x)=\sigma q(x)L\gamma(x)\leq\sigma B<\sigma_0 \quad \hbox{on $\Omega_{T_1}$},
\eeq
and
\beq
|\nabla\gamma_\sigma(x)|=\sigma|\nabla\gamma(x)|\leq\sigma A<\eta \quad \hbox{on $\Omega_{T_1}$},
\eeq
up to have chosen $\sigma$ sufficiently small.

Therefore, we find a point $z_0\in\overline{\Omega}_{T_1}\setminus\Omega_{T_3}$ where $u-\gamma_{\sigma}$ attains a
positive absolute maximum $m$. As in the proof of Theorem A, $z_0\in\Omega_{T_1}$ and at $z_0$ we have
\beq
u(z_0)>\gamma_{\sigma}(z_0)>\alpha>u^*_{T_1}>u^*-\frac{\eta}{2}>u^*-\eta;
\eeq
furthermore
\beq
|\nabla u(z_0)|=|\nabla\gamma_{\sigma}(z_0)|=\sigma|\nabla\gamma(z_0)|\leq\sigma A<\eta,
\eeq
by our choice of $\sigma$. Thus $z_0\in B_{\eta}\cap\Omega_{T_1}$ and a contradiction is achieved as at the end
of the proof of Theorem A.
\end{proof}

We note that the validity of Remark \ref{rmkB} is immediate. Indeed defining $\widehat{\gamma}=\varphi(\gamma)$ as in the
previous subsection, conditions (\ref{GamB}) $i)$, $ii)$ are satisfied for $\widehat{\gamma}$; as for condition
(\ref{GamB}) $iii)$, using (\ref{GamB'}) $iii)'$ and $G\geq0$, we have
\beq
|\nabla\widehat{\gamma}|=\frac{|\nabla\gamma|}{G(\gamma)+A\gamma\log\gamma}\leq\frac{G(\gamma)}{G(\gamma)+A\gamma\log\gamma}\leq 1
\eeq
outside a compact set. Thus, we also have the validity of (\ref{GamB}) $iii)$ for $\widehat{\gamma}$.

\begin{rmk}
%
\label{rmk3.1}
As mentioned in the Introduction, if $\M$ is a complete, non-compact Riemannian manifold then a natural candidate
for $\gamma(x)$ is $r(x)$. However, $r(x)$ is not $\mathcal{C}^2$ in $C=\left\{o\right\}\cup\cut(o)$ and assumptions (\ref{Gam}) $ii)$ (in Theorem A) and
(\ref{GamB}) $ii)$ and $iii)$ (in Theorem B) have to
be understood and assumed in the weak sense. Nevertheless the proof of Theorem A (and that of Theorem B) still works in this
case, adding in both the assumption $1/q\in L^1_{loc}(M)$ (see section \rl{nonlinearcase} for more details). Indeed, the only problem is at the end of the proof if the point $z_0$ where $u-\gamma_{\sigma}$ attains
its positive absolute maximum $m>0$ is in $C$. However, $u-\gamma_{\sigma}$ is now given by
$f=u-\alpha-\sigma(r-T_1)$ and to avoid the problem we use a trick of Calabi as follows \cite{Ca}. Take any point $z$ where the function
$f$  attains its positive absolute maximum. In case $z\notin C$ then
\[
|\nabla u(z)|=\sigma|\nabla r(z)|=\sigma<\eta.
\]
Otherwise, if $z\in C$, let $\varsigma$ be a minimizing geodesic, parametrized by arclenght, joining $o$ to $z$. For $\ep>0$ suitably
small let $o_{\ep}=\varsigma(\ep)$ and $r_{\ep}(x)=\textrm{dist}_{M}(x,o_{\ep})$. Thus $z\notin\cut(o_{\ep})$
and $r_{\ep}(x)$ is smooth around $z$. Consider the function
\beq
f_{\ep}=u-\alpha-\sigma(r_{\ep}+\ep-T_1).
\eeq
Using the triangle inequality we have
\beq
f_{\ep}(x)-f(x)=\sigma(r(x)-r_\ep(x)-\ep)\leq 0
\eeq
in a neighborhood of $z$. But on $\varsigma|_{[\ep,r(z)]}$, $f_{\ep}=f$ since
\[
r(\varsigma(t))=\textrm{dist}_{M}(o,o_{\ep})+\textrm{dist}_{M}(o_{\ep},\varsigma(t))
=r_\ep(x)+\ep.
\]
Therefore $z$ is also a local maximum for $f_{\ep}$ which is $\mathcal{C}^2$ in a neighborhood of $z$.
Thus,
at $z$
\beq
\label{luis4}
|\nabla u(z)|=\sigma|\nabla r_{\ep}(z)|=\sigma<\eta
\eeq
up to have chosen $\sigma$ sufficiently small.

To complete the proof of Theorem A in this case we proceed as follows. We let
\beq
K=\left\{x\in\Omega_{T_1}:\ (u-\gamma_{\sigma})(x)=f(x)=m \right\},
\eeq
where now $\Omega_t=\{ x\in M: r(x)>t\}$. For very $x\in K$ we have
\[
u(x)=\alpha+\sigma(r(x)-T_1)+m>\alpha>u^*-\frac{\eta}{2},
\]
so that $K\subset A_\eta$.
Fix $z_0\in K$ and choose $0<\mu<m$ sufficiently near to $m$ so that
the connected component $\Lambda_{z_0}$ of the set
\beq
\left\{x\in\Omega_{T_1}:\ (u-\gamma_{\sigma})(x)>\mu)\right\}
\eeq
containing $z_0$ is contained in $A_{\eta}$. Note that $\Lambda_{z_0}$ is bounded by \rf{2.18}. From \rf{contr1} and \rf{luis3}, we have
\beq
Lu(x)\geq\frac{\sigma_0}{q(x)}>L\gamma_{\sigma}(x)
\eeq
on $A_{\eta}\cap\Omega_{T_1}$ in the weak sense. Moreover, $u=\gamma_{\sigma}+\mu$ on the boundary of $\Lambda_{z_0}$.
Applying Theorem 5.3 of \cite{PuRS} (the request $v<\delta$ is vacuous in our case) we deduce that
$u\leq\gamma_{\sigma}+\mu$ on $\Lambda_{z_0}$. But $z_0\in\Lambda_{z_0}$ and from the above we have
$m\leq\mu$, contradiction.

As for completing the proof of Theorem B, we follow the same reasoning replacing $A_\eta$ by $B_\eta$. For doing it,
simple observe that $K\subset B_\eta$ by \rf{luis4}.
\end{rmk}

We omit the details of the proof of Theorem B', which follows similarly from the proof of Theorem B.

A typical application of Theorem B is the following "a priori" estimate. Note that condition \rf{M1.63} below coincides
(for $f=F$) with the Keller-Osserman condition for the Laplace-Beltrami operator (see \cite{FPR}) showing that in this type
of results what really matters is the structure, in this case linear, of the differential operator.
\begin{thm}
\label{thmapriori}
Assume on $\M$ the validity of the $q$-maximum principle for the operator $L=L_{T,X}$ and suppose that
\beq
\label{M1.60}
q(x)T(\cdot,\cdot)\leq C\g{\cdot}{\cdot}
\eeq
for some $C>0$. Let $u\in\mathcal{C}^2(M)$ be a solution of the differential inequality
\beq
\label{M1.61}
q(x)Lu\geq\varphi(u,|\nabla u|)
\eeq
with $\varphi(t,y)$ continuous in $t$, $\mathcal{C}^2$ in $y$ and such that
\beq
\label{M1.62}
\frac{\partial^2\varphi}{\partial y^2}(t,y)\geq 0.
\eeq
Set $f(t)=\varphi(t,0)$. Then a sufficient condition to guarantee that
\[
u^*=\sup_Mu<+\infty
\]
is the existence of a continuous function $F$ positive on $[a,+\infty)$ for some $a\in\mathbb{R}$, satisfying
the following
\beq
\label{M1.63}
\left(\int_a^{t}F(s)ds\right)^{-1/2}\in L^1(+\infty),
\eeq
\beq
\label{M1.64}
\limsup_{t\rightarrow+\infty}\frac{\int_a^tF(s)ds}{tF(t)}<+\infty,
\eeq
\beq
\label{M1.65}
\liminf_{t\rightarrow+\infty}\frac{f(t)}{F(t)}>0
\eeq
and
\beq
\label{M1.66}
\liminf_{t\rightarrow+\infty}\frac{\left(\int_a^{t}F(s)ds\right)^{-1/2}}{F(t)}\frac{\partial\varphi}{\partial y}(t,0)>-\infty.
\eeq
Furthermore, in this case, we have
\beq
\label{M1.67}
f(u^*)\leq 0,
\eeq
\end{thm}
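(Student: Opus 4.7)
The plan is to first establish $u^*<+\infty$ by contradiction via a Keller-Osserman change of variable, and then derive $f(u^*)\leq 0$ as an immediate consequence of the $q$-maximum principle applied to $u$ itself. I interpret the ``$q$-maximum principle'' of the hypothesis in the Omori-Yau form provided by Theorem B, in keeping with the preceding remark that this result is a typical application of Theorem B.

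Assume for contradiction that $u^*=+\infty$. Set $K(t)=\int_a^tF(s)\,ds$ and
\[
\phi(t)=\int_a^tK(s)^{-1/2}\,ds,
\]
which is bounded on $[a,+\infty)$ by \rf{M1.63}; let $\phi_\infty=\lim_{t\to+\infty}\phi(t)<+\infty$. One computes $\phi'(t)=K(t)^{-1/2}>0$ and $\phi''(t)=-F(t)/(2K(t)^{3/2})<0$. After extending $\phi$ smoothly to the left of $a$, set $v=\phi(u)\in\mathcal{C}^2(M)$; since $\phi$ is increasing and bounded, $v^*=\phi_\infty<+\infty$. The Omori-Yau $q$-maximum principle applied to $v$ yields a sequence $\{x_k\}\subset M$ with $v(x_k)\to v^*$, $q(x_k)Lv(x_k)<1/k$ and $|\nabla v(x_k)|<1/k$. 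Since $\phi$ is continuous and strictly increasing, $u_k:=u(x_k)\to+\infty$, and from $|\nabla v(x_k)|=\phi'(u_k)|\nabla u(x_k)|$ I obtain the gradient control $|\nabla u(x_k)|<K(u_k)^{1/2}/k$.

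Next, from $Lv=\phi'(u)Lu+\phi''(u)T(\nabla u,\nabla u)$, combined with \rf{M1.61}, \rf{M1.60}, the signs of $\phi'$ and $\phi''$, and the convexity estimate $\varphi(u,y)\geq f(u)+\varphi_y(u,0)y$ coming from \rf{M1.62}, one obtains
\[
\tfrac{1}{k}>q(x_k)Lv(x_k)\geq\phi'(u_k)f(u_k)+\phi'(u_k)\varphi_y(u_k,0)|\nabla u(x_k)|+C\phi''(u_k)|\nabla u(x_k)|^2.
\]
Substituting the bound on $|\nabla u(x_k)|$, using \rf{M1.66} to bound the (possibly negative) contribution $\phi'(u_k)\varphi_y(u_k,0)|\nabla u(x_k)|$, and using the explicit forms of $\phi',\phi''$, the inequality rearranges into an estimate forcing $f(u_k)/K(u_k)^{1/2}$ to stay bounded. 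On the other hand, $K^{-1/2}\in L^1$ forces $K(t)\gg t^2$, whence by \rf{M1.64} one has $F(t)/K(t)^{1/2}\gtrsim K(t)^{1/2}/t\to+\infty$, and by \rf{M1.65} also $f(t)/K(t)^{1/2}\to+\infty$ as $t\to+\infty$. Evaluating at $t=u_k\to+\infty$ yields the contradiction.

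Given $u^*<+\infty$, the second conclusion is immediate: the $q$-maximum principle applied to $u$ produces $\{x_k\}$ with $u(x_k)\to u^*$, $q(x_k)Lu(x_k)<1/k$ and $|\nabla u(x_k)|<1/k$; then \rf{M1.61} gives $\varphi(u(x_k),|\nabla u(x_k)|)<1/k$, and passing to the limit using the continuity of $\varphi$ and $|\nabla u(x_k)|\to 0$ yields $f(u^*)=\varphi(u^*,0)\leq 0$. The main obstacle is the delicate error estimate in the first step: the gradient-dependent contribution $\phi'(u_k)\varphi_y(u_k,0)|\nabla u(x_k)|$ must be absorbed into an error term that, combined with the Omori-Yau bound $|\nabla v(x_k)|<1/k$ and condition \rf{M1.66}, does not overwhelm the principal Keller-Osserman quantity $f(u_k)/K(u_k)^{1/2}$, whose divergence is guaranteed by \rf{M1.64} and \rf{M1.65}.
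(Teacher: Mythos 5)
Your strategy for proving $u^*<+\infty$ is essentially the paper's Keller--Osserman substitution, applied directly to the bounded increasing $v=\phi(u)$ with $\phi'(t)=K(t)^{-1/2}$, $K(t)=\int_a^tF$, whereas the paper applies the (infimum form of the) $q$-principle to $1/g(u)$ with $g=\phi+\mathrm{const}\geq 1$; the reciprocal normalization keeps bounded factors of $g$ in every error coefficient. In your direct version the key inequality at $x_k$ is $1/k>f(u_k)/K(u_k)^{1/2}+\phi'(u_k)\varphi_y(u_k,0)|\nabla u(x_k)|+C\phi''(u_k)|\nabla u(x_k)|^2$, whose gradient-squared term is only controlled by $\geq -\tfrac{C}{2k^2}\,F(u_k)/K(u_k)^{1/2}$, and this is not bounded: the rate of growth of $u_k$ is unknown, so the claim that the inequality ``forces $f(u_k)/K(u_k)^{1/2}$ to stay bounded'' does not hold as stated. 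What actually closes the argument is dividing through by $F(u_k)/K(u_k)^{1/2}$: the principal term becomes $f(u_k)/F(u_k)$, with positive $\liminf$ by \eqref{M1.65}, the gradient-squared term becomes $\geq -C/(2k^2)$, the $\varphi_y$-term becomes $\geq -C'/k$ by \eqref{M1.66}, and the left-hand side $K(u_k)^{1/2}/(kF(u_k))\to 0$ by \eqref{M1.64} together with $K(t)\gg t^2$. Your outline contains all of these ingredients, but the stated mechanism is a genuine misstep.

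For the second conclusion $f(u^*)\leq 0$ there is a real gap: you pass to the limit in $\varphi(u(x_k),|\nabla u(x_k)|)<1/k$ ``using the continuity of $\varphi$,'' but $\varphi$ is only assumed continuous in $t$ and $\mathcal{C}^2$ in $y$ separately, not jointly continuous at $(u^*,0)$. The paper flags this explicitly (``if $\varphi$ were continuous in both variables, \dots\ it would be enough to apply the $q$-maximum principle to $u$'') and in the general case instead reruns the first-part argument after redefining $g$ so that it changes concavity only once at $T=\min\{u^*,a\}-1$, ensuring $g''<0$ on $(T,+\infty)$; one then arrives at the analogue of the displayed inequality along a sequence with $u(x_k)\to u^*<+\infty$ and takes the limit there, evaluating $\varphi$ off the axis $y=0$ only through the one-sided Taylor bound $\varphi(t,y)\geq f(t)+\varphi_y(t,0)y$.
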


\begin{proof}
Following the proof of Theorem 1.31 in \cite{PRS2} we choose $g\in\mathcal{C}^2(\mathbb{R})$ to be increasing
from $1$ to $2$ on $(-\infty,a+1)$ and defined by
\[
g(t)=\int_{a+1}^{t}\frac{ds}{\left(\int_a^{s}F(r)dr\right)^{1/2}}+2 \quad on \quad [a+1,+\infty).
\]
Observe that
\beq
\label{M1.1}
g'(t)=\frac{1}{\left(\int_a^{t}F(s)ds\right)^{1/2}} \quad
and \quad
g''(t)=-\frac{F(t)}{2}g'(t)^3<0
\eeq
on $(a+1,+\infty)$. We reason by contradiction and assume that $u^*=+\infty$. Since $g$ is increasing,
\[
\inf_M\frac{1}{g(u)}=\frac{1}{g(u^*)}=\frac{1}{g(+\infty)}>0.
\]
By applying the $q$-maximum principle for $L$ to $1/g$, there exists a sequence $\{x_k\}\subset M$ such that
\beq
\label{M1.69}
\lim_{k\rightarrow+\infty}\frac{1}{g(u(x_k))}=\frac{1}{g(+\infty)},
\eeq
or equivalently
\beq
\label{M1.69bis}
\lim_{k\rightarrow+\infty}u(x_k)=+\infty,
\eeq
\beq
\label{M1.70}
|\nabla \frac{1}{g(u)}(x_k|=\frac{g'(u(x_k))}{g(u(x_k))^2}|\nabla u(x_k)|<\frac{1}{k}
\eeq
and finally
\begin{eqnarray}
\label{M1.71}
\nonumber -\frac{1}{k}<q(x_k)L(\frac{1}{g(u)})(x_k) & = & q(x_k)\left\{-\frac{g'(u(x_k))}{g(u(x_k))^2}Lu(x_k)+\right.\\
{} & {} & \left.+\left(\frac{2g'(u(x_k))^2}{g(u(x_k))^3}-\frac{g''(u(x_k))}{g(u(x_k))^2}\right)T(\nabla u(x_k),\nabla u(x_k))\right\}
\end{eqnarray}
Because of \rf{M1.69bis}, we can suppose that the sequence $\{x_k\}$ satsifies $u(x_k)>a+1$, so that \rf{M1.1}
holds along the sequence $u(x_k)$. Multiplying \rf{M1.71} by
\[
\frac{g'(u(x_k))^2}{-g(u(x_k))^2g''(u(x_k))}>0
\]
and using \rf{M1.61}, we obtain
\begin{eqnarray}
\label{M1.2}
\nonumber \frac{g'(u(x_k))^3}{g(u(x_k))^4|g''(u(x_k))|}\varphi(u(x_k),|\nabla u(x_k)|)\leq
\frac{1}{k}\frac{g'(u(x_k))^2}{g(u(x_k))^2|g''(u(x_k))|}+\\
{} \\
\nonumber +\left(\frac{2g'(u(x_k))^4}{g(u(x_k))^5|g''(u(x_k))|}+\frac{g'(u(x_k))^2}{g(u(x_k))^4}\right)q(x_k)T(\nabla u(x_k),\nabla u(x_k)).
\end{eqnarray}
Since $g\geq 1$, then $1/g^2\leq 1/g$ and
\[
\frac{g'(u(x_k))^2}{g(u(x_k))^2|g''(u(x_k))|}\leq \frac{g'(u(x_k))^2}{g(u(x_k))|g''(u(x_k))|}.
\]
On the other hand, by \rf{M1.60} we also have
\[
q(x_k)T(\nabla u(x_k),\nabla u(x_k))\leq C|\nabla u(x_k)|^2.
\]
Using these two facts in \rf{M1.2}, jointly with \rf{M1.70}, yields
\begin{eqnarray}
\label{M1.3}
\nonumber \frac{g'(u(x_k))^3}{g(u(x_k))^4|g''(u(x_k))|}\varphi(u(x_k),|\nabla u(x_k)|)\leq
\frac{g'(u(x_k))^2}{g(u(x_k))|g''(u(x_k))|}\left(\frac{1}{k}+\frac{2C}{k^2}\right)+\frac{C}{k^2}.
\end{eqnarray}
Next, we use Taylor formula with respect to $y$ centered at $(u(x_k),0)$ and \rf{M1.62} to have
\[
\varphi(u(x_k),|\nabla u(x_k)|)\geq f(u(x_k))+\frac{\partial\varphi}{\partial y}(u(x_k),0)|\nabla u(x_k)|,
\]
so that
\beq
\label{M1.5}
\frac{g'(u(x_k))^3f(u(x_k))}{g(u(x_k))^4|g''(u(x_k))|}+A_k\leq \frac{g'(u(x_k))^2}{g(u(x_k))|g''(u(x_k))|}\left(\frac{1}{k}+\frac{2C}{k^2}\right)+\frac{C}{k^2},
\eeq
where
\[
A_k:=\min\left\{0,\frac{1}{k}\frac{\partial\varphi}{\partial y}(u(x_k),0)\frac{g'(u(x_k))^2}{g(u(x_k))^2|g''(u(x_k))|}\right\}.
\]
In what follows, we always assume that $t$ is taken sufficiently large. Observe that we have
\[
\frac{g'(t)^2}{g(t)|g''(t)|}=2\frac{(\int_a^tF(s)ds)^{1/2}}{g(t)F(t)}=
2\frac{\int_a^tF(s)ds}{g(t)(\int_a^tF(s)ds)^{1/2}F(t)},
\]
and
\[
g(t)\geq\frac{t-a-1}{(\int_a^tF(s)ds)^{1/2}},
\]
so that
\[
\frac{g'(t)^2}{g(t)|g''(t)|}\leq c\frac{\int_a^tF(s)ds}{tF(t)}, \quad t\gg 1,
\]
for some positive constant $c$. Therefore, using \rf{M1.64} we deduce
\[
\limsup_{k\rightarrow+\infty}\frac{g'(u(x_k))^2}{g(u(x_k))|g''(u(x_k))|}<+\infty,
\]
and then
\beq
\label{limsup1}
\limsup_{k\rightarrow+\infty}\frac{g'(u(x_k))^2}{g(u(x_k))|g''(u(x_k))|}\left(\frac{1}{k}+\frac{2C}{k^2}\right)+\frac{C}{k^2}=0.
\eeq
On the other hand,
\[
\frac{g'(t)^3f(t)}{g(t)^4|g''(t)|}=\frac{2f(t)}{g(t)^4F(t)}\geq c\frac{f(t)}{F(t)}
\]
for some $c>0$, since $\sup_Mg<+\infty$ by \rf{M1.63}. Therefore, using \rf{M1.65} we have
\beq
\label{M1.6}
\liminf_{k\rightarrow+\infty}\frac{g'(u(x_k))^3f(u(x_k))}{g(u(x_k))^4|g''(u(x_k))|}>0
\eeq
Finally, observe that
\[
\frac{\partial\varphi}{\partial y}(t,0)\frac{g'(t)^2}{g(t)^2|g''(t)|}=\frac{1}{g(t)^2}
\left(\frac{\partial\varphi}{\partial y}(t,0)\frac{(\int_a^tF(s)ds)^{1/2}}{F(t)}\right)
\]
whence, using $\sup_Mg<+\infty$ and \rf{M1.66}, we get
\[
\liminf_{t\rightarrow+\infty}\left(\frac{\partial\varphi}{\partial y}(t,0)\frac{g'(t)^2}{g(t)^2|g''(t)|}\right)>-\infty.
\]
Thus,
\beq
\label{M1.7}
\liminf_{k\rightarrow+\infty}A_k=0.
\eeq
Therefore, taking $k\rightarrow+\infty$ in \rf{M1.5} and using \rf{limsup1}, \rf{M1.6} and \rf{M1.7} we obtain the
desired contradiction.

As for the conclusion $f(u^*)\leq 0$, we note that if $\varphi$ were continuous in both variables, then to reach the desired conclusion
it would be enough to apply the $q$-maximum principle to $u$ to get a sequence $\{y_k\}$ with $\lim u(y_k)=u^*$, $\lim|\nabla u(y_k)|=0$ and
\[
\frac{1}{k}>q(y_k)Lu(y_k)\geq\varphi(u(y_k),|\nabla u(y_k)|).
\]
Thus, taking the limit as $k\rightarrow+\infty$ we would get $f(u^*)\leq 0$. On the other hand, in our more general
assumptions, we can argue in the following way. We re-define the
function $g(t)$ at the very beginning of the proof in such a way that it changes concavity only once at the point $T=\min\{u^*,a\}-1$.
We emphasize that with this choice $g''<0$ on $(T,+\infty)$. We now proceed as in the proof of the first part of the Theorem,
applying the $q$-maximum principle to the function $1/g(u)$, and get the existence of a sequence $\{x_k\}$ as before, with
$g''(u(x_k))<0$ if $k$ is sufficiently large. That is all we need to arrive at \rf{M1.5}. Taking the limit in
this latter for $k\rightarrow+\infty$ and using $\lim_{k\rightarrow+\infty}u(x_k)=u^*<+\infty$, we conclude that
$f(u^*)\leq 0$.
\end{proof}

\section{An application to hypersurfaces into non-degenerate Euclidean cones}
\label{hypersurfaces}

We begin with a general observation. Consider $\M$ a complete, non-compact Riemannian manifold, let $o\in M$ be a reference point,
denote by $r(x)$ the Riemannian distance from $o$, and let $D_o=M\setminus\mathrm{cut}(o)$ be the domain of the
normal geodesic coordinates centered at $o$. Assume that
\[
K_{\mathrm{rad}}\geq -G(r)^2,
\]
where $K_{\mathrm{rad}}$ denotes the radial sectional curvature of $M$, and $G\in\mathcal{C}^1(\mathbb{R}^+_0)$
satisfy
\beq
\label{luis5}
\textrm{i)} \,\,\, G(0)>0, \,\,\, \textrm{ii)} \,\,\, G'(t)\geq 0,  \,\,\, \textrm{and iii)} \,\,\,
\frac{1}{G}\notin L^1(+\infty)
\eeq
Using the general Hessian comparison theorem of \cite{PRS3} one has
\beq
\label{hess}
\Hess(r)\leq\frac{g'(r)}{g(r)}(\g{}{}-dr\otimes dr)
\eeq
on $D_o$, where $g(t)$ is the (positive on $\mathbb{R}^+$) solution of the Cauchy
problem
\beq
\label{cau1}
\left\{
\begin{array}{l}
g''(t)-G(t)^2g(t)=0, \\
g(0)=0, \quad g'(0)=1.
\end{array}
\right.
\eeq
Now let
\beq
\label{NL3.1}
\psi(t)=\frac{1}{G(0)}\left(e^{\int_0^tG(s)ds}-1\right).
\eeq
Then $\psi(0)=0$, $\psi'(0)=1$ and
\beq
\psi''(t)-G(t)^2\psi(t)=\frac{1}{G(0)}\left(G(t)^2+G'(t)\,e^{\int_0^tG(s)ds}\right)\geq 0,
\eeq
that is, $\psi$ is a subsolution of \rf{cau1}. By Sturm comparison theorem
\beq
\frac{g'(t)}{g(t)}\leq\frac{\psi'(t)}{\psi(t)}\leq C G(t)
\eeq
where the last inequality holds for a constant $C>0$ and $t$ sufficiently large.
Hence, from  \rf{hess} and for $r$ sufficiently large
\beq
\Hess(r)\leq C G(r)\g{\ }{\ }.
\eeq
Thus, given the symmetric positive semi-definite  $(2,0)$-tensor $T$ we have
\beq
Lr=\tr(T\circ\Hess(r))\leq C(\tr T) \,G(r) \quad \hbox{for $r>>1$}.
\eeq
Assume that $\tr T>0$ (equivalently, $T\neq 0$) outside a compact set of $M$. Then
\beq
\label{revised1}
\frac{1}{\tr T}Lr\leq C G(r)
\eeq
on $D_o$ and for $r$ sufficiently large. Since $|\nabla r|=1$, if $\mathrm{cut}(o)=\emptyset$ request \rf{GamB}
of Theorem B is satisfied; otherwise we have to prove the validity of \rf{revised1} weakly outside a sufficiently large
ball $B_R$. Since
\[
Lu=\tr(T\circ\Hess(u))=\div (T(\nabla u,\ )^\sharp)-\div T(\nabla u),
\]
we have to show that,
for every $\psi\in\mathcal{C}^\infty_0(M\setminus\overline{B}_R)$, $\psi\geq 0$,
\[
-\int_{M\setminus\overline{B}_R}\left(T(\nabla r,\nabla\psi)+\div T(\nabla r)\psi)\right)\leq C
\int_{M\setminus\overline{B}_R}{\tr T}G(r)\psi,
\]
and this can be obtained as in the proof of Lemma 4.1 in \cite{PuRS} under the assumption that
\beq
\label{revised2}
T(\nabla r,\nu)\geq 0 \quad \text{ in } \Omega,
\eeq
for an exhaustion of $M\setminus\mathrm{cut}(o)$ with smooth bounded domains $\Omega$, star-shaped with respect to $o$, where
$\nu$ denotes the outwards normal along $\partial\Omega$. We have thus proved the validity of the following
\begin{pro}\label{trT}
Let $\M$ be a complete, non-compact Riemannian manifold whose radial sectional curvature satisfies
\begin{equation}
\Krad\geq-G(r)^2
\end{equation}
with $G\in\mathcal{C}^1(\mathbb{R}^+)$ as in \rf{luis5}. Let $T$ be a symmetric, positive semi-definite,
$(2,0)$-tensor field such that $T\neq 0$ outside a compact set of $M$.
Assume that either $\mathrm{cut}(o)=\emptyset$ or otherwise that \rf{revised2} holds.
Then the $q$-Omori-Yau maximum principle holds on
$M$ for the operator $L=\tr(T\circ\Hess)$ with $q=1/\tr T$.
\end{pro}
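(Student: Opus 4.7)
The plan is to apply Theorem B (in the weak form described in Remarks \ref{rmkA} and \ref{rmk3.1}) with the test function $\gamma(x)=r(x)$ and weight $q(x)=1/\tr T(x)$. The three conditions of $(\Gamma_B)$, allowed to depend on $G(\gamma)$ via Remark \ref{rmkB}, must be verified.

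First, condition i) is immediate from completeness: $r(x)\to+\infty$ as $x\to\infty$. For condition iii), we have $|\nabla r|\equiv 1$ on $D_o$; since $G(0)>0$ and $G'\geq 0$ by \rf{luis5}, the function $G$ is bounded below by $G(0)>0$, so $|\nabla r|\leq \max\{1,1/G(0)\}\,G(r)$, matching the form in Remark \ref{rmkB}. Moreover $G$ itself satisfies \rf{G1}: part i) is precisely \rf{luis5} iii), and part ii) holds with $A=0$ because $G'\geq 0$.

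Next I would verify ii) on $D_o$, where $r$ is smooth. This is exactly what the preceding discussion establishes: Sturm comparison applied to the Cauchy problem \rf{cau1}, using the explicit positive subsolution $\psi$ in \rf{NL3.1}, yields $g'/g\leq\psi'/\psi\leq CG$ for large argument; plugging this bound into the Hessian comparison inequality \rf{hess} and contracting with the positive semi-definite tensor $T$ gives
\[
\frac{1}{\tr T}\,Lr=\frac{1}{\tr T}\,\tr(T\circ\Hess(r))\leq CG(r)
\]
outside a compact set, which is \rf{revised1}. This is the required form of $(\Gamma_B)$ ii) in the sense of Remark \ref{rmkB}.

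The remaining step, and the main technical point, is to interpret \rf{revised1} weakly across $\cut(o)$ when the latter is non-empty. Here I would use the divergence decomposition
\[
Lu=\div\bigl(T(\nabla u,\cdot)^\sharp\bigr)-\div T(\nabla u)
\]
and integrate by parts against an arbitrary non-negative $\psi\in\mathcal{C}_0^\infty(M\setminus\overline{B}_R)$, exhausting $M\setminus\cut(o)$ by smooth bounded domains $\Omega$ star-shaped with respect to $o$. The only delicate term is the boundary contribution $\int_{\partial\Omega}T(\nabla r,\nu)\psi$, and it has the right sign precisely because of assumption \rf{revised2}. This passage follows verbatim the argument of Lemma 4.1 of \cite{PuRS}, yielding the desired inequality weakly outside a sufficiently large ball. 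Once $(\Gamma_B)$ is in place in this weak sense, Theorem B together with the cut-locus trick of Remark \ref{rmk3.1} (Calabi's device, as carried out there) and the admissibility of the $G(\gamma)$ bound via Remark \ref{rmkB} delivers the $q$-Omori--Yau maximum principle for $L$, completing the proof. The principal obstacle is the weak-sense verification across $\cut(o)$, since this is where \rf{revised2} enters in an essential way.
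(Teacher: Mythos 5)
Your proposal is correct and follows essentially the same route as the paper: take $\gamma=r$ and $q=1/\tr T$, derive $\Hess(r)\leq CG(r)\langle\;,\;\rangle$ from the Hessian comparison theorem via Sturm comparison with the explicit subsolution $\psi$, contract with $T$ to obtain $(1/\tr T)Lr\leq CG(r)$, and then appeal to Theorem B together with Remarks \ref{rmkB} and \ref{rmk3.1}, handling the cut locus weakly via Lemma 4.1 of \cite{PuRS} under assumption \rf{revised2}. The only cosmetic slip is citing Remark \ref{rmkA} instead of Remark \ref{rmkB} in your opening sentence; the body of the argument correctly invokes Remark \ref{rmkB}, and the constant $\max\{C,1/G(0),1\}$ absorbed into $G$ still satisfies \rf{G1}, so the verification is sound.
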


Now we shall apply Proposition \ref{trT} when $T$ is the $k$-th Newton tensor of an isometrically immersed oriented
hypersurface into the Euclidean space for which, from now and till the end of this section, we assume the validity
of $\mathrm{cut}(o)=\emptyset$ or otherwise that of \rf{revised2}. Note that for $T=I$ \rf{revised2} is automatically satisfied.
Thus, let $\varphi:M^m\rightarrow\R^{m+1}$ denote such an immersion of a connected, $m$-dimensional
Riemannian manifold and assume that it is oriented by a globally defined normal unit vector $N$.
Let $A$ denote the second fundamental form of the immersion with respect to $N$. Then, the $k$-mean curvatures of the hypersurface are
given by
$$
H_k= {m \choose k}^{-1}S_k,
$$
where $S_0=1$ and, for $k=1,\ldots,m$, $S_k$ is the $k$-th elementary symmetric function of the principal curvatures of
the
hypersurface. In particular, $H_1=H$ is the mean curvature, $H_m$ is the Gauss-Kronecker curvature and
$H_2$ is, up to a constant, the scalar curvature of $M$.

The Newton tensors $P_k:TM\rightarrow TM$ associated to the immersion are defined inductively by $P_0=I$ and
\[
P_k=S_k I-AP_{k-1},  \quad 1\leq k\leq m.
\]
Note, for further use, that
\[
\mathrm{Tr}P_k=(m-k)S_k=c_kH_k \quad \textrm{and} \quad \mathrm{Tr}AP_k=(k+1)S_{k+1}=c_kH_{k+1},
\]
where
$$
c_k=(m-k){m \choose k}=(k+1){m \choose k\!+\!1}.
$$
Associated to each globally defined Newton tensor $P_k:TM\rightarrow TM$, we may consider the second order
differential operator $L_k:\mathcal{C}^{2}(M)\rightarrow\mathcal{C}(M)$ given by
\[
L_k=\mathrm{Tr}(P_k \circ \Hess)=\div (P_k(\nabla u,\ )^\sharp)-\g{\div P_k}{\nabla u},
\]
where $\mathrm{div}P_k=\mathrm{Tr}\nabla P_k$. In particular, $L_0$ is the Laplace-Beltrami operator $\Delta$. Observe that
$L_k$ is semi-elliptic (respectively, elliptic) if and only if $P_k$ is positive semi-definite (respectively, positive definite).
\begin{rmk}
\label{rmk4.2}
In this respect, it is worth pointing out that the ellipticity of the operator $L_1$ is guaranteed by the assumption $H_2>0$. Indeed,
if this happens the mean curvature does not vanish on $M$, because of the basic inequality $H_1^2\geq H_2$. Therefore,
we can choose the normal unit vector $N$ on $M$ so that $H_1>0$. Furthermore
\[
m^2H_1^2=\sum_{j=1}^m\kappa_j^2+m(m-1)H_2>\kappa_i^2
\]
for every $i=1,\ldots, m$, and then the eigenvalues of $P_1$ satisfy $\mu_{i,1}=mH_1-\kappa_i>0$ for every $i$
(see, for instance, Lemma 3.10 in \cite{elbert}). This shows ellipticity of $L_1$. Regarding the operator $L_j$ when
$j\geq 2$, a natural hypothesis to guarantee ellipticity is the existence an elliptic point in $M$, that is, a point
$x\in M$ at which the second fundamental form $A$ is positive definite (with respect to the appropriate orientation).
In fact, it follows from the proof of \cite[Proposition 3.2]{BC} that if $M$ has an elliptic point and $H_{k+1}\neq 0$
on $M$, then each $L_j$, $1\leq j\leq k$ is elliptic.
\end{rmk}

Fix an origin $o\in\R^{m+1}$ and a unit vector $a\in\mathbb{S}^{m}$. For $\theta\in(0,\pi/2)$, we denote by
$\mathcal{C}=\mathcal{C}_{o,a,\theta}$ the non-degenerate cone with vertex $o$, direction $a$ and width $\theta$, that is,
\[
\mathcal{C}=\mathcal{C}_{o,a,\theta}=\{ p\in\R^{m+1}\backslash\{o\} : \langle\frac{p-o}{|p-o|},{a}\rangle\geq \cos\theta \}.
\]
By non-degenerate we mean that it is strictly smaller than a half-space. We consider here
isometrically immersed hypersurfaces $\varphi:M^m\rightarrow\R^{m+1}$ with images inside a non-degenerate cone of $\R^{m+1}$ and, as an
application of Proposition \ref{trT} and motivated by the results in \cite{MaR}, we provide a lower bound for the width of the cone in
terms of higher order mean curvatures of the hypersurface. Specifically, we obtain the following result.
\begin{thm}
\label{thm3}
Let $\varphi:M^m\rightarrow\R^{m+1}$ be an oriented isometric immersion of a complete non-compact Riemannian manifold $M^m$ whose radial sectional
curvatures satisfy
\[
\Krad\geq-G(r)^2
\]
with $G\in\mathcal{C}^1(\mathbb{R}^+)$ as in \rf{luis5}.
Assume that 
$P_k$ is positive semi-definite and $H_k$ does not vanish on $M$, and the validity
of $\mathrm{cut}(o)=\emptyset$ or otherwise that of \rf{revised2}. If $\varphi(M)$ is contained into a non-degenerate cone
$\mathcal{C}=\mathcal{C}_{o,a,\theta}$ as above with vertex at $o\in\R^{m+1}\backslash\varphi(M)$, then
\beq
\label{sup1}
\sup\left(\frac{|H_{k+1}|}{H_k}\right)\geq A_0\frac{\cos^2\theta}{d(\Pi_a,\varphi(M))},
\eeq
where $A_0=\frac{6\sqrt{3}}{25\sqrt{5}}\approx 0.186$,
$\Pi_a$ denote the hyperplane orthogonal to $a$ passing through $o$ and $d(\Pi_a,\varphi(M))$ is the Euclidean distance between
this hyperplane and $\varphi(M)$.
\end{thm}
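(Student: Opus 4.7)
The plan is to apply Proposition~\ref{trT} with $T=P_k$, the $k$-th Newton tensor: the hypotheses of Theorem~\ref{thm3} match exactly those of the proposition, so the $q$-Omori-Yau maximum principle holds on $M$ for the operator $L_k=\mathrm{tr}(P_k\circ\mathrm{Hess})$ with $q=1/(c_k H_k)$. The two identities I will rely on are $L_k\langle\varphi-o,v\rangle=c_k H_{k+1}\langle N,v\rangle$, valid for any constant $v\in\mathbb{R}^{m+1}$, and $L_k|\varphi-o|^2=2c_k H_k+2c_k H_{k+1}\langle\varphi-o,N\rangle$; these are the standard consequences of the Weingarten formula together with the algebraic identity $\mathrm{tr}(P_k A)=c_k H_{k+1}$.

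Following the approach of \cite{MaR}, I test the principle on a one-parameter family of ambient functions $\phi_\mu(p)=\mu\langle p-o,a\rangle-|p-o|^2$ with $\mu>0$, viewed as functions on $M$ via $\varphi$. The cone containment $\langle p-o,a\rangle\geq\cos\theta\,|p-o|$ immediately gives $\phi_\mu\leq\mu^2\cos^2\theta/4$ on $M$, so $\phi_\mu$ is bounded above. Using the two identities above,
\[
qL_k\phi_\mu=\frac{H_{k+1}}{H_k}\bigl\langle\mu a-2(\varphi-o),\,N\bigr\rangle-2.
\]
The $q$-Omori-Yau principle then produces a sequence $\{x_n\}\subset M$ along which $\phi_\mu(x_n)\to\sup\phi_\mu$, $|\nabla\phi_\mu(x_n)|\to 0$ and $qL_k\phi_\mu(x_n)<1/n$. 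Setting $w_n=\mu a-2(\varphi(x_n)-o)$, the gradient condition forces the tangential part of $w_n$ to vanish asymptotically; combined with the elementary identity $|w_n|^2=\mu^2-4\phi_\mu(x_n)$, this yields $|\langle w_n,N(x_n)\rangle|\to\sqrt{\mu^2-4\sup\phi_\mu}$.

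The next step extracts the curvature bound. The upshot of the $q$-Omori-Yau inequality, together with the geometric fact that at the sup points $M$ is asymptotically tangent to a Euclidean sphere (the level set $\{\phi_\mu=\sup\phi_\mu\}$ is a sphere centered at $o+(\mu/2)a$), is a quantitative relation between $\sup|H_{k+1}|/H_k$, $\mu$ and $\sup\phi_\mu$. The cone condition and the distance constraint $\langle\varphi-o,a\rangle\geq d$ provide the sharp estimate $\sup\phi_\mu\leq\max_{u\geq d}(\mu u-u^2/\cos^2\theta)=\mu d-d^2/\cos^2\theta$, valid when $\mu\leq 2d/\cos^2\theta$. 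Plugging this into the relation yields an inequality of the form
\[
\sup_M\frac{|H_{k+1}|}{H_k}\,\sqrt{\mu^2-4\mu d+4d^2/\cos^2\theta}\geq 2.
\]

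Finally, the parameter $\mu$ is optimized. The change of variable $t=4d^2/(\mu^2\cos^2\theta)\in(0,1]$ reduces the matter to maximizing $(1-t)t^{3/2}$ on $[0,1]$; this maximum, attained at $t=3/5$, equals $(2/5)(3/5)^{3/2}=6\sqrt{3}/(25\sqrt{5})=A_0$. Substituting back delivers the claimed inequality. The main obstacle is the translation of the $q$-Omori-Yau inequality into the lower bound displayed above: the principle only yields an upper bound on the signed product $(H_{k+1}/H_k)\langle w_n,N(x_n)\rangle$, so a careful sign analysis is required, using the asymptotic parallelism of $w_n$ and $N(x_n)$ together with the freedom to reverse orientation (which flips the sign of $H_{k+1}$ while preserving the orientation-invariant quantity $|H_{k+1}|/H_k$), in order to convert it into the needed inequality. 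Carrying out this step rigorously, and executing the subsequent algebraic optimization identifying $A_0$, form the technical core of the proof.
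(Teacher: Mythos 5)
Your overall setup is right — applying Proposition~\ref{trT} with $T=P_k$, $q=1/(c_kH_k)$ so that the $q$-Omori--Yau principle holds for $L_k$, and using the standard identities for $L_k$ of the height and squared-distance functions. This is the same framework as the paper. But the test function $\phi_\mu$ you pick cannot work, and the gap you flag at the end is not a technical detail that ``a careful sign analysis'' would fill: the inequality produced by the maximum principle points in the wrong direction for your $\phi_\mu$, and nothing you invoke can reverse it.

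Concretely, you correctly obtain $q\,L_k\phi_\mu=\frac{H_{k+1}}{H_k}\langle w,N\rangle-2$ with $w=\mu a-2(\varphi-o)$. Since $|w|$ is bounded on any superlevel set of $\phi_\mu$, whenever $S:=\sup|H_{k+1}|/H_k$ is small this expression satisfies $q\,L_k\phi_\mu\leq S|w|-2<0$ there. That is, $\phi_\mu$ produces a negative upper bound on $q\,L_k\phi_\mu$, which is fully \emph{consistent} with the conclusion $q\,L_k\phi_\mu(x_n)<1/n$ of the $q$-Omori--Yau principle and yields no contradiction and no lower bound on $S$. The reason is structural: $\phi_\mu$ is \emph{concave} as an ambient function (the Hessian is $\mu a\otimes a-2\,\mathrm{id}$ up to curvature terms, dominated by $-2\,\mathrm{id}$), so the ``$\mathrm{tr}\,P_k$'' contribution to $L_k\phi_\mu$ is the negative constant $-2c_kH_k$. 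To contradict the weak maximum principle you need the opposite: a test function $u$ with $\frac{1}{c_kH_k}L_ku$ bounded \emph{below} by a positive constant on the superlevel set when $S$ is small. Your proposed fixes do not help: the product $\frac{H_{k+1}}{H_k}\langle w,N\rangle$ is orientation-invariant (both factors flip sign), so reversing $N$ changes nothing; and asymptotic parallelism of $w_n$ and $N$ only tells you $\langle w_n,N\rangle\approx\pm|w_n|$, not which sign occurs, and even knowing the sign would still leave you with an upper bound on $S$, not a lower one. (As a secondary point, the cone inclusion gives $|p-o|\leq\langle p-o,a\rangle/\cos\theta$, hence a \emph{lower} bound $\phi_\mu\geq\mu u-u^2/\cos^2\theta$, not the upper bound you state; the upper bound is the $\theta$-free $\phi_\mu\leq\mu^2/4$.)

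The paper sidesteps this precisely by choosing a test function whose radial part is convex: $u(x)=\sqrt{\alpha^2+\beta^2\cos^2\theta|\varphi(x)|^2}-\langle\varphi(x),a\rangle$. The square root contributes a strictly positive term $\frac{\beta^2\cos^2\theta}{\sqrt{\alpha^2+\beta^2\cos^2\theta|\varphi|^2}}\,\mathrm{tr}\,P_k$ to $L_ku$, and after estimating the remaining terms one finds
$\frac{1}{c_kH_k}L_ku\geq-\sup\frac{|H_{k+1}|}{H_k}+\frac{\alpha^2\beta^2\cos^2\theta}{(\alpha^2+\beta^2\cos^2\theta|\varphi|^2)^{3/2}}$
on the superlevel set $\overline\Omega_0$, with the second term bounded below by $(\cos^2\theta/\alpha)\,\beta^2(1-\beta^2)^{3/2}$ there. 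Assuming the stated inequality fails, this makes $\frac{1}{c_kH_k}L_ku$ strictly positive on $\overline\Omega_0$, contradicting the $q$-weak maximum principle; optimizing $\beta^2(1-\beta^2)^{3/2}$ (at $\beta^2=2/5$) gives $A_0$. Your final optimization $(1-t)t^{3/2}$ is the same function under $t=1-\beta^2$, so you land on the right constant, but this is a coincidence of arithmetic: the intermediate steps leading to the displayed inequality $\sup\frac{|H_{k+1}|}{H_k}\sqrt{\mu^2-4\mu d+4d^2/\cos^2\theta}\geq 2$ cannot be justified from the weak maximum principle with $\phi_\mu$. If you want to follow your path, you must replace $\phi_\mu$ by a test function that is (ambient-)convex on the cone and bounded above, which is exactly what the paper's $u$ provides.
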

\begin{proof}
To proving the theorem we shall follow the ideas and make use of some computations performed in the proof of Theorem 1.4 in \cite{MaR}.
We may assume without loss of generality that the vertex of the cone is the origin $0\in\R^{m+1}$, so that there exists $a\in\mathbb{S}^{m}$
and $0<\theta<\pi/2$ such that
\beq
\label{4.16}
\langle\frac{\varphi(x)}{|\varphi(x)|},{a}\rangle\geq \cos\theta
\eeq
for every $x\in M$. Observe that
\[
d(\Pi_a,\varphi(M))=\inf_{x\in M}\g{\varphi(x)}{a}.
\]
We reason by contradiction and assume that \rf{sup1} does not hold. Therefore, there exists $x_0\in M$ such that
\[
\g{\varphi(x_0)}{a}\sup\left(\frac{|H_{k+1}|}{H_k}\right)<A\cos^2\theta
\]
for a positive constant $A<A_0$. For the ease of notation we set $\alpha=\g{\varphi(x_0)}{a}>0$, let $\beta\in(0,1)$ and define the
function
\[
u(x)=\sqrt{\alpha^2+\beta^2\cos^2\theta|\varphi(x)|^2}-\g{\varphi(x)}{a}
\]
for every $x\in M$. Note that, by construction, $u(x_0)>0$. We claim that
\[
u(x)\leq\alpha
\]
for every $x\in M$. Indeed, an algebraic manipulation shows that this is equivalent to
\[
\g{\varphi(x)}{a}^2+2\alpha\g{\varphi(x)}{a}-\beta^2\cos^2\theta|\varphi(x)|^2\geq 0,
\]
which holds true by \rf{4.16} since
\[
\g{\varphi(x)}{a}^2+2\alpha\g{\varphi(x)}{a}-\beta^2\cos^2\theta|\varphi(x)|^2\geq
\g{\varphi(x)}{a}^2-\cos^2\theta|\varphi(x)|^2\geq 0.
\]

Next, we consider the closed non-empty set
\[
\overline{\Omega}_0=\{x\in M: u(x)\geq u(x_0)\}.
\]
For every $x\in\overline{\Omega}_0$ and using \rf{4.16} one has
\[
\sqrt{\alpha^2+\beta^2\cos^2\theta|\varphi(x)|^2}\geq u(x_0)+\g{\varphi(x)}{a}\geq u(x_0)+\cos\theta|\varphi(x)|>0.
\]
Squaring this inequality yields
\[
(1-\beta^2)\cos^2\theta|\varphi(x)|^2+2u(x_0)\cos\theta|\varphi(x)|+u(x_0)^2-\alpha^2\leq 0
\]
for every $x\in\overline{\Omega}_0$. The left half side of the above inequality is a quadratic polynomial in $|\varphi(x)|$ with two distinct roots
$\alpha_{-}<0<\alpha_{+}$ given by
\[
\alpha_{\pm}=\frac{\pm\sqrt{\beta^2u(x_0)^2+(1-\beta^2)\alpha^2}-u(x_0)}{(1-\beta^2)\cos\theta}.
\]
Therefore, for every $x\in\overline{\Omega}_0$ it holds that
\[
0<|\varphi(x)|\leq\alpha_{+}=\frac{\sqrt{\beta^2u(x_0)^2+(1-\beta^2)\alpha^2}-u(x_0)}{(1-\beta^2)\cos\theta}.
\]
Using the elementary inequality $\sqrt{1+t^2}\leq 1+t$ for $t\geq 0$, we have
\begin{eqnarray*}
\alpha_{+} & = &
\frac{1}{(1-\beta^2)\cos\theta}\left(\sqrt{\beta^2u(x_0)^2\left(1+\frac{(1-\beta^2)\alpha^2}{\beta^2u(x_0)^2}\right)}-u(x_0)\right)\\
{} & = &
\frac{\beta u(x_0)}{(1-\beta^2)\cos\theta}\sqrt{1+\frac{(1-\beta^2)\alpha^2}{\beta^2u(x_0)^2}}-\frac{u(x_0)}{(1-\beta^2)\cos\theta}\\
{} & \leq &
\frac{\beta u(x_0)}{(1-\beta^2)\cos\theta}\left(1+\frac{\sqrt{1-\beta^2}\alpha}{\beta u(x_0)}\right)-\frac{u(x_0)}{(1-\beta^2)\cos\theta}\\
{} & = & \frac{\alpha}{\sqrt{1-\beta^2}\cos\theta}-\frac{u(x_0)}{(1+\beta)\cos\theta}\\
{} & \leq & \frac{\alpha}{\sqrt{1-\beta^2}\cos\theta}.
\end{eqnarray*}
Therefore,
\beq
\label{4.44}
|\varphi(x)|\leq\frac{\alpha}{\sqrt{1-\beta^2}\cos\theta} \quad \textrm{ on } \,\, \overline{\Omega}_0.
\eeq

To compute $L_ku=\tr(P_k\circ\Hess u)$ when $P_k$ is the $k$-th Newton tensor, we first observe that
\beq
\label{4.45}
\nabla u=-a^\top+\frac{\beta^2\cos^2\theta}{\sqrt{\alpha^2+\beta^2\cos^2\theta|\varphi|^2}}\varphi^\top,
\eeq
where, as usual, ${}^\top$ denotes tangential component along the immersion $\varphi$. That is,
\[
a=a^\top+\g{a}{N}N \,\, \textrm{ and } \,\, \varphi=\varphi^\top+\g{\varphi}{N}N.
\]
Using that
\[
\nabla_Xa^\top=\g{a}{N}AX
\]
and
\[
\nabla_X\varphi^\top=X+\g{\varphi}{N}AX
\]
for every $X\in TM$, we get from \rf{4.45} that
\begin{eqnarray}
\label{4.46}
\nonumber \nabla^2u(X,Y)=\g{\nabla_X\nabla u}{Y} & = & \frac{\beta^2\cos^2\theta}{\sqrt{\alpha^2+\beta^2\cos^2\theta|\varphi|^2}}\g{X}{Y}\\
{} & {} &  +\langle{\frac{\beta^2\cos^2\theta}{\sqrt{\alpha^2+\beta^2\cos^2\theta|\varphi|^2}}\varphi-a},{N}\rangle\g{AX}{Y}\\
\nonumber {} & {} & +\frac{-\beta^4\cos^4\theta}{(\alpha^2+\beta^2\cos^2\theta|\varphi|^2)^{3/2}}\g{X}{\varphi^\top}\g{Y}{\varphi^\top},
\end{eqnarray}
for every $X,Y\in TM$. Hence,
\begin{eqnarray}
L_ku=\sum_{i=1}^{m}\nabla^2u(e_i,Pe_i)& = & \langle\frac{\xi}{|\varphi|}\varphi-a,N\rangle\tr(A\circ P_k)+\frac{\xi}{|\varphi|}\tr(P_k)\\
{} & {} & -\frac{\xi^2}{|\varphi|^2}\frac{1}{\sqrt{\alpha^2+\beta^2\cos^2\theta|\varphi|^2}}\g{P_k\varphi^\top}{\varphi^\top},
\end{eqnarray}
where
\[
\xi(x)=\frac{\beta^2\cos^2\theta|\varphi(x)|}{\sqrt{\alpha^2+\beta^2\cos^2\theta|\varphi(x)|^2}}.
\]
That is,
\beq
\label{4.471}
L_ku=c_k\langle\frac{\xi}{|\varphi|}\varphi-a,N\rangle H_{k+1}+c_k\frac{\xi}{|\varphi|}H_k
-\frac{\xi^2}{|\varphi|^2}\frac{1}{\sqrt{\alpha^2+\beta^2\cos^2\theta|\varphi|^2}}\g{P_k\varphi^\top}{\varphi^\top}.
\eeq
Observe that, by \rf{4.16},
\beq
\label{4.475}
\left|\frac{\xi}{|\varphi|}\varphi-a\right|^2=\xi^2-2\xi\frac{\g{\varphi}{a}}{|\varphi|}+1\leq
\xi^2-2\cos\theta\xi+1\leq 1,
\eeq
since $0<\xi(x)<\beta\cos\theta$ for every $x\in M$. On the other hand, since $P_k$ is positive semi-definite we have
\beq
\label{4.47}
0\leq\g{P_k\varphi^\top}{\varphi^\top}\leq\tr(P_k)|\varphi^\top|^2\leq c_kH_k|\varphi|^2.
\eeq
Since, by our hypothesis, $H_k>0$ on $M$, we obtain from here that
\begin{eqnarray}
\label{3.175}
\nonumber \frac{1}{c_kH_k}L_ku & \geq & -\frac{|H_{k+1}|}{H_k}+\frac{\xi}{|\varphi|}-\frac{\xi^2}{\sqrt{\alpha^2+\beta^2\cos^2\theta|\varphi|^2}}\\
{} & \geq & -\sup\frac{|H_{k+1}|}{H_k}+\frac{\alpha^2\beta^2\cos^2\theta}{(\alpha^2+\beta^2\cos^2\theta|\varphi|^2)^{3/2}}
\end{eqnarray}
on $M$. Recall that, from our choice of $x_0$, we have
\[
\sup\frac{|H_{k+1}|}{H_k}<A\frac{\cos^2\theta}{\alpha}
\]
for a positive constant $A<A_0=\frac{6\sqrt{3}}{25\sqrt{5}}$. On the other hand, by \rf{4.44} we also have
\beq
\label{4.495}
|\varphi|^2<\frac{\alpha^2}{(1-\beta^2)\cos^2\theta}
\eeq
on $\overline{\Omega}_0$. This yields
\[
\frac{\alpha^2\beta^2\cos^2\theta}{(\alpha^2+\beta^2\cos^2\theta|\varphi|^2)^{3/2}}\geq\frac{\cos^2\theta}{\alpha}\beta^2(1-\beta^2)^{3/2}
\]
on $\overline{\Omega}_0$. Choose $\beta=\sqrt{2/5}$. Then, $\beta^2(1-\beta^2)^{3/2}=A_0$ and
\beq
\label{4.51}
\frac{1}{c_kH_k}L_ku\geq\frac{\cos^2\theta}{\alpha}\left(A_0-A\right)>0 \quad \textrm{ on } \overline{\Omega}_0.
\eeq
There are now two possibilities:
\begin{itemize}
\item[i)] $x_0$ is an absolute maximum for $u$ on $M$. Then, $L_ku(x_0)\leq 0$, contradicting \rf{4.51}.
\item[ii)] $\Omega_0=\{ x\in M : u(x)>u(x_0)\}\neq\emptyset$. In this case, since $u(x)$ is bounded above on $M$ it is enough
to evaluate inequality \rf{4.51} along a sequence $\{x_k\}$ realizing the $1/c_kH_k$-weak maximum principle for the operator $L_k$ on $M$. This
latter holds because of Proposition \rl{trT} and the assumptions of the theorem. We thus have $u(x_k)>u^*-1/k$ and therefore
$x_k\in\Omega_0$ for $k$ sufficiently large and
\[
0<\frac{\cos^2\theta}{\alpha}\left(A_0-A\right)\leq\frac{1}{c_kH_k}L_ku(x_k)<\frac{1}{k}.
\]
By taking $\lim_{k\rightarrow\infty}$ in this inequality we get a contradiction.
\end{itemize}
This completes the proof of the theorem.
\end{proof}

\begin{cor}
\label{coro1}
Let $\varphi:M^m\rightarrow\R^{m+1}$ be an oriented isometric immersion of a complete non-compact Riemannian manifold $M^m$
whose radial sectional curvatures satisfy
\[
\Krad\geq-G(r)^2
\]
with $G\in\mathcal{C}^1(\mathbb{R}^+)$ as in \rf{luis5}.
Assume that $P_k$ is positive semi-definite, and the validity
of $\mathrm{cut}(o)=\emptyset$ or otherwise that of \rf{revised2}. If $\varphi(M)$ is contained into a non-degenerate cone
$\mathcal{C}=\mathcal{C}_{o,a,\theta}$ as above with vertex at $o\in\R^{m+1}\backslash\varphi(M)$, then
\beq
\label{ineqcoro1}
\sup|H_{k+1}|\geq A_0\frac{\cos^2\theta}{d(\Pi_a,\varphi(M))}\inf H_k,
\eeq
where $A_0=\frac{6\sqrt{3}}{25\sqrt{5}}\approx 0.186$,
$\Pi_a$ denote the hyperplane orthogonal to $a$ passing through $o$ and $d(\Pi_a,\varphi(M))$ is the Euclidean distance between
this hyperplane and $\varphi(M)$.
\end{cor}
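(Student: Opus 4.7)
The approach is to deduce Corollary~\rl{coro1} directly from Theorem~\rl{thm3} by means of a dichotomy on $\inf H_k$ together with an elementary pointwise comparison. The corollary is essentially a strictly weaker, extremum-over-extremum version of Theorem~\rl{thm3}, and the dichotomy is designed to remove the non-vanishing assumption on $H_k$ that appears in the latter.

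First I would observe that positive semi-definiteness of $P_k$ gives $c_kH_k=\tr P_k\geq 0$, so $H_k\geq 0$ on $M$ and $\inf H_k$ is a well-defined non-negative quantity. If $\inf H_k = 0$, then the right-hand side of \rf{ineqcoro1} vanishes while $\sup|H_{k+1}|\geq 0$, and the inequality holds trivially. Assume therefore that $\inf H_k>0$. Then $H_k(x)>0$ for every $x\in M$, so $H_k$ does not vanish on $M$, and all the remaining hypotheses (the radial curvature bound, positive semi-definiteness of $P_k$, and the condition involving $\cut(o)$ or \rf{revised2}) are inherited verbatim from the corollary. Hence Theorem~\rl{thm3} applies and yields
\[
\sup_{M}\frac{|H_{k+1}|}{H_k}\geq A_0\,\frac{\cos^2\theta}{d(\Pi_a,\varphi(M))}.
\]

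The final step is the pointwise bound $|H_{k+1}(x)|/H_k(x)\leq |H_{k+1}(x)|/\inf H_k$, valid because $H_k(x)\geq \inf H_k>0$; taking the supremum over $x\in M$ gives
\[
\sup_{M}\frac{|H_{k+1}|}{H_k}\leq \frac{\sup_{M}|H_{k+1}|}{\inf_{M} H_k}.
\]
Chaining this with the inequality produced by Theorem~\rl{thm3} and multiplying through by the positive quantity $\inf H_k$ produces exactly \rf{ineqcoro1}.

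I do not anticipate any real obstacle: the only genuinely delicate work (the construction of the auxiliary function $u$, the computation of $L_ku$, and the Omori--Yau argument) has been carried out in Theorem~\rl{thm3}, and the corollary is a formal consequence. The only case deserving explicit mention is the degenerate one $\inf H_k=0$, which the dichotomy dispatches at once.
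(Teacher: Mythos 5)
Your proof is correct and follows exactly the paper's own argument: the dichotomy on $\inf H_k$ (trivial if it vanishes, otherwise $H_k>0$ so Theorem~\ref{thm3} applies), combined with the elementary observation that $\sup(|H_{k+1}|/H_k)\leq \sup|H_{k+1}|/\inf H_k$. Nothing further is needed.
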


For the proof of Corollary \ref{coro1} observe that
(\ref{ineqcoro1}) holds trivially if $\inf_M H_k=0$. If $\inf_M H_k>0$, then $H_k>0$ everywhere and the result follows directly
from Theorem \ref{thm3} since the estimate (\ref{ineqcoro1}) is
weaker than (\ref{sup1}).

On the other hand, in the case of $k=1$ we can slightly improve our Theorem \ref{thm3}, both regarding the condition on the ellipticity of
$P_1$ and the value of the constant $A_0$ in \rf{sup1}. Specifically we prove the following.
\begin{cor}
Let $\varphi:M^m\rightarrow\R^{m+1}$ be an oriented isometric immersion of a complete non-compact Riemannian manifold $M^m$ whose radial sectional
curvatures satisfy
\[
\Krad\geq-G(r)^2
\]
with $G\in\mathcal{C}^1(\mathbb{R}^+)$ as in \rf{luis5}. Assume the validity
of $\mathrm{cut}(o)=\emptyset$ or otherwise that of \rf{revised2}.
If $H_2>0$ (equivalently, the scalar curvature of $M$ is positive) and $\varphi(M)$ is contained into a non-degenerate cone
$\mathcal{C}=\mathcal{C}_{o,a,\theta}$ as above with vertex at $o\in\R^{m+1}\backslash\varphi(M)$, then
\beq
\label{sup1b}
\sup\sqrt{H_2}\geq\sup\left(\frac{H_2}{H_1}\right)\geq B_m\frac{\cos^2\theta}{d(\Pi_a,\varphi(M))},
\eeq
where $B_2=B_3=A_0=\frac{6\sqrt{3}}{25\sqrt{5}}\approx 0.186$, and
$$
B_m=\max_{0<\varrho<1}\left(\varrho^2\sqrt{1-\varrho^2}(1-\frac{3}{m}\varrho^2)\right)
$$
for $m\geq 4$.
\end{cor}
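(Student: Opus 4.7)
My first step is to dispose of the inequality $\sup\sqrt{H_2}\geq\sup(H_2/H_1)$. The hypothesis $H_2>0$ forces $H_1$ to have constant (non-zero) sign by Newton's inequality $H_1^2\geq H_2$, so choosing the orientation $N$ so that $H_1>0$ everywhere we have $H_2/H_1\leq H_2/\sqrt{H_2}=\sqrt{H_2}$, which yields the left inequality.

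For the right inequality, I plan to run the argument of Theorem \ref{thm3} with $k=1$. The hypothesis $H_2>0$ combined with Remark \ref{rmk4.2} gives that $P_1$ is positive definite and $L_1$ is elliptic, so, via Proposition \ref{trT}, the $q$-weak maximum principle for $L_1$ holds with $q=1/(c_1 H_1)$ and the entire mechanism of Theorem \ref{thm3} is available. Reasoning by contradiction, I fix $x_0\in M$ such that $\langle\varphi(x_0),a\rangle\sup(H_2/H_1)<B\cos^2\theta$ for some $B<B_m$, and I consider the very same test function $u(x)=\sqrt{\alpha^2+\beta^2\cos^2\theta|\varphi(x)|^2}-\langle\varphi(x),a\rangle$ introduced in the proof of Theorem \ref{thm3}, with $\alpha=\langle\varphi(x_0),a\rangle$ and a parameter $\beta\in(0,1)$ to be optimized later. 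For $m=2,3$ we have $B_m=A_0$, so the bound produced by the proof of Theorem \ref{thm3} (reading $\varrho$ for $\beta$) already gives the conclusion, and no refinement is needed.

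For $m\geq 4$ the improvement in the constant comes from a sharper analysis of the last term in \eqref{4.471}. The coarse estimate \eqref{4.47}, namely $\langle P_1\varphi^\top,\varphi^\top\rangle\leq c_1 H_1|\varphi|^2$, is in fact wasteful: exploiting the explicit form $P_1=mH_1\,I-A$ together with the decomposition $\varphi=\varphi^\top+\langle\varphi,N\rangle N$ (so that $|\varphi^\top|^2=|\varphi|^2-\langle\varphi,N\rangle^2$) and the bound \eqref{4.44} on $|\varphi|$ on $\overline{\Omega}_0$, one obtains, after elementary manipulations, a refined inequality whose net effect in \eqref{3.175} is to replace the factor $(1-\beta^2)$ by $(1-\tfrac{3}{m}\beta^2)$ inside one of the two occurrences of $(1-\beta^2)^{1/2}$; that is, $\beta^2(1-\beta^2)^{3/2}$ is upgraded to $\beta^2(1-\beta^2)^{1/2}(1-\tfrac{3}{m}\beta^2)$. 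A one-variable optimization over $\beta\in(0,1)$, identified with the $\varrho$ in the statement, then produces the constant $B_m$. Once this improved lower bound for $L_1 u/(c_1 H_1)$ is in place on $\overline{\Omega}_0$, the final contradiction is obtained exactly as in case (ii) at the end of the proof of Theorem \ref{thm3}, by evaluating along a sequence $\{x_k\}$ produced by the $q$-weak maximum principle for $L_1$ given by Proposition \ref{trT}.

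The main technical obstacle I anticipate is precisely the bookkeeping that turns the crude estimate $\langle P_1\varphi^\top,\varphi^\top\rangle\leq c_1 H_1|\varphi|^2$ into the sharper one responsible for the $(1-\tfrac{3}{m}\beta^2)$ factor; the rest of the argument is structurally identical to that of Theorem \ref{thm3}, and the one-variable optimization that delivers $B_m$ is routine calculus.
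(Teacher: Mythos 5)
Your plan is structurally the same as the paper's: the left inequality reduces to $H_1^2\geq H_2$ after orienting so that $H_1>0$ (which, as you note, is forced by $H_2>0$ via Remark \ref{rmk4.2}), and the right inequality is obtained by re-running the $k=1$ case of Theorem \ref{thm3} with a sharper treatment of the $\langle P_1\varphi^\top,\varphi^\top\rangle$ term, then optimizing the parameter $\beta$. However, the phrase ``after elementary manipulations'' hides the one genuinely new ingredient, and the decomposition $|\varphi^\top|^2=|\varphi|^2-\langle\varphi,N\rangle^2$ you put forward is not what drives the improvement — the paper simply discards the normal part via $|\varphi^\top|\leq|\varphi|$ and gains nothing from tracking $\langle\varphi,N\rangle$. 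The actual mechanism is the operator-norm bound $|A|<mH_1$, which follows directly from $H_2>0$ through the identity $m^2H_1^2-|A|^2=m(m-1)H_2$; writing $P_1=mH_1\,I-A$ this gives
\[
\langle P_1\varphi^\top,\varphi^\top\rangle = mH_1|\varphi^\top|^2-\langle A\varphi^\top,\varphi^\top\rangle\leq (mH_1+|A|)|\varphi^\top|^2\leq 2mH_1|\varphi|^2 ,
\]
and since $c_1=m(m-1)$ the unit coefficient in front of the last term of \eqref{3.175} drops to $2/(m-1)$. Plugging this into the algebra and then using \eqref{4.495} produces the factor $\beta^2\sqrt{1-\beta^2}\bigl(1-\tfrac{3}{m}\beta^2\bigr)$ you correctly anticipate, after which the one-variable maximization yields $B_m$. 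Without recognising that the hypothesis $H_2>0$ is precisely what supplies $|A|<mH_1$, the proposed ``elementary manipulations'' via the orthogonal split of $\varphi$ would not produce the sharper estimate; with it, the rest of your argument (Proposition \ref{trT} for the $1/(c_1H_1)$-weak maximum principle, then the two cases at the end of Theorem \ref{thm3}) goes through exactly as you describe.
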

We emphasize that $B_m>A_0$ and $B_m\sim 2/(3\sqrt{3})\approx 0.385$ when $m$ goes to infinity.
\begin{proof}
According to Remark \rl{rmk4.2}, the assumption $H_2>0$ and $m^2H_1^2-|A|^2=m(m-1)H_2>0$ guarantee that $P_1$ is positive definite
for an appropriate choice of the unit normal $N$, so that $H_1>0$ and $mH_1-|A|>0$ on $M$.

By Cauchy-Schwarz inequality,
\[
H_1^2-H_2=\frac{1}{m(m-1)}\left(\sum_{i=1}^m\kappa^2_i-\frac{1}{m}\left(\sum_{i=1}^m\kappa_i\right)^2\right)\geq 0.
\]
This immediately yields $H_2/H_1\leq\sqrt{H_2}$ and gives the first inequality in \rf{sup1b}.

As for the second inequality in \rf{sup1b}, arguing as in the proof of Theorem \ref{thm3}, we reason by contradiction
and assume that there exists a point $x_0\in M$ such that
\beq
\label{4.75a}
\alpha\sup\left(\frac{H_{2}}{H_1}\right)<A\cos^2\theta
\eeq
for a positive constant $A<B_m$, where $\alpha=\g{\varphi(x_0)}{a}$. We then follow the proof of Theorem \ref{thm3} until we reach \rf{4.471}, which jointly with \rf{4.475} yields
\[
L_1u\geq -c_1H_{2}+c_1\frac{\xi}{|\varphi|}H_1-\frac{\xi^2}{|\varphi|^2}
\frac{1}{\sqrt{\alpha^2+\beta^2\cos^2\theta|\varphi|^2}}\g{P_1\varphi^\top}{\varphi^\top}.
\]
The idea to improve the value of the constant $A_0$ in \rf{sup1} is to improve the estimate \rf{4.47} in the following way. Using
that $P_1=mH_1I-A$ we have
\beq
\label{3.17b}
\g{P_1\varphi^\top}{\varphi^\top}=mH_1|\varphi^\top|^2-\g{A\varphi^\top}{\varphi^\top}\leq 2mH_1|\varphi|^2,
\eeq
because of the fact that
\[
|\g{A\varphi^\top}{\varphi^\top}|\leq|A||\varphi^\top|^2\leq mH_1|\varphi|^2.
\]
Note that \rf{3.17b} gives a better estimate than \rf{4.47} for $k=1$ when $m\geq 4$. In that case, making use of \rf{3.17b} we
obtain
\begin{eqnarray*}
\frac{1}{c_1H_1}L_1u & \geq & -\frac{H_{2}}{H_1}+\frac{\xi}{|\varphi|}-
\frac{2}{m-1}\frac{\xi^2}{\sqrt{\alpha^2+\beta^2\cos^2\theta|\varphi|^2}}\\
{} & \geq & -\sup\frac{H_{2}}{H_1}+
\frac{\alpha^2\beta^2\cos^2\theta+\frac{m-3}{m}\beta^4\cos^4\theta|\varphi|^2}{(\alpha^2+\beta^2\cos^2\theta|\varphi|^2)^{3/2}}
\end{eqnarray*}
on $M$, instead of \rf{3.175}. It follows from \rf{4.495} that
\[
\frac{\alpha^2\beta^2\cos^2\theta+\frac{m-3}{m}\beta^4\cos^4\theta|\varphi|^2}{(\alpha^2+\beta^2\cos^2\theta|\varphi|^2)^{3/2}}
\geq\frac{\cos^2\theta}{\alpha}\beta^2\sqrt{1-\beta^2}(1-\frac{3}{m}\beta^2)
\]
on $\overline{\Omega}_0$. Choose $\beta\in(0,1)$ to maximize $\varrho^2\sqrt{1-\varrho^2}(1-\frac{3}{m}\varrho^2)$. That is,
\[
\beta^2=\frac{4+m-\sqrt{(4+m)^2-40m/3}}{10}
\]
and
$$
B_m=\beta^2\sqrt{1-\beta^2}(1-\frac{3}{m}\beta^2).
$$
Then,
\beq
\label{4.51b}
\frac{1}{c_1H_1}L_1u\geq\frac{\cos^2\theta}{\alpha}\left(B_m-A\right)>0 \quad \textrm{ on } \overline{\Omega}_0.
\eeq
The proof then finishes as in Theorem \ref{thm3}.
\end{proof}

For the case $k\geq 2$ there is an inequality corresponding to the first one in \rf{sup1b}, given by
\[
\sup_M \sqrt[k+1]{H_{k+1}}\geq\sup_M\left(\frac{H_{k+1}}{H_k}\right).
\]
However, to guarantee its validity ones needs to assume the existence of an elliptic point (see \cite{ADR} for details).

\section{An application to PDE's}
We give a typical application of Theorem A to PDE's in the following comparison theorem. Towards this end let us
introduce the next definition: A function $f:\mathbb{R}^+\rightarrow\mathbb{R}^+$ is said to be $\zeta$-increasing
if for every $\zeta>1$ and for every closed interval $I\subset\mathbb{R}^+$ there exists $A=A(\zeta,I)>0$ such that
\beq
\label{F0}
\frac{f(\zeta t)}{f(t)}\geq 1+A
\eeq
for every $t\in I$. Note that this implies that $tf(t)$ is strictly increasing on $\mathbb{R}^+$. Typical examples of $\zeta$-increasing
functions are $f(t)=t^\sigma\log^{a}(1+t)$ with $\sigma\geq 1, a\geq 0$, $f(t)=t^\sigma e^{at}$ with $\sigma\geq 0, a>0$, and
so on.
\begin{thm}
\label{Fthm}
Let $a(x), b(x)\in\mathcal{C}^0(M)$ and $f\in\mathcal{C}^1(\mathbb{R}^+)$ be a $\zeta$-increasing function.
Assume that
\beq
\label{F1}
i) \,\, b(x)>0 \quad on \quad M \quad and \quad ii) \, \sup_M\frac{a_{-}}{b}<+\infty,
\eeq
where, as usual, $a_{-}$ denotes the negative part of $a$. For $L=L_{T,X}$ as in our previous notation, let
$u,v\in\mathcal{C}^2(M)$ be non-negative solutions of
\beq
\label{F2}
Lu+a(x)u-b(x)uf(u)\geq 0\geq Lv+a(x)v-b(x)vf(v)
\eeq
on $M$ satisfying
\beq
\label{F3}
i) \, v(x)\geq C_1, \quad ii) \, u(x)\leq {C_2}
\eeq
outside some compact set $K\subset M$ for some positive constants $C_1,C_2$. Then $$u(x)\leq v(x)$$ on $M$ provided
that the $1/b$-weak maximum principle holds for $L$.
\end{thm}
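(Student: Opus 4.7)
My plan is to argue by contradiction using the $1/b$-weak maximum principle for $L$, applied to the difference $\phi := u-v$. First I will observe that $u$ is globally bounded on $M$: outside the compact set $K$ one has $u\leq C_2$, and on $K$ the continuous function $u$ is bounded, so $u^{*}:=\sup_M u<+\infty$; since $v\geq 0$, the function $\phi$ is bounded above, and I set $\phi^{*}:=\sup_M\phi$. I will suppose, aiming at a contradiction, that $\phi^{*}>0$.

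Subtracting the two differential inequalities and using the linearity of $L$ gives
\[
L\phi+a\phi\;\geq\;b\bigl(uf(u)-vf(v)\bigr)\qquad\text{on }M.
\]
Since $f$ is $\zeta$-increasing, the function $F(t):=tf(t)$ satisfies $F'(t)=f(t)+tf'(t)>0$ on $\mathbb{R}^+$; hence by the mean value theorem, on $\{\phi\geq 0\}$ one can write $uf(u)-vf(v)=F'(\eta)\,\phi$ for some $\eta=\eta(x)$ between $v(x)$ and $u(x)$. I then apply the $1/b$-weak maximum principle to $\phi$ to obtain a sequence $\{x_k\}\subset M$ with $\phi(x_k)>\phi^{*}-1/k$ and $(1/b(x_k))L\phi(x_k)<1/k$; combined with the previous display, this yields
\[
\Bigl(F'(\eta(x_k))-\tfrac{a(x_k)}{b(x_k)}\Bigr)\,\phi(x_k)\;<\;\tfrac{1}{k}.
\]

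Along this sequence, both $u(x_k)\leq u^{*}$ and $v(x_k)=u(x_k)-\phi(x_k)\leq u^{*}$ are bounded, and $u(x_k)-v(x_k)>\phi^{*}/2>0$ for $k$ large; hence $\eta(x_k)$ is confined to a compact subinterval $J\subset(0,u^{*}]$. The $\zeta$-increasing hypothesis applied on $J$ supplies a uniform positive lower bound $F'(\eta(x_k))\geq c_0>0$, while the condition $\sup_M a_{-}/b<+\infty$ prevents $a(x_k)/b(x_k)$ from drifting to $-\infty$ along the sequence. Combining these bounds, the left-hand side of the displayed inequality should stay bounded below by a strictly positive constant, contradicting the upper bound $1/k\to 0$ and closing the proof.

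The hardest point will be precisely this last balance: I need to verify that $F'(\eta)-a/b$ remains uniformly positive on the maximizing sequence, which requires delicately combining the quantitative strict monotonicity of $tf(t)$ provided by the $\zeta$-increasing structure with the one-sided bound on $a_{-}/b$, together with the a priori boundedness of $u$ and the boundedness of $v$ along $\{x_k\}$ that is forced by $\phi(x_k)\to\phi^{*}>0$.
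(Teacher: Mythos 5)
There is a genuine gap, and it comes from a sign error that the paper's argument is specifically designed to sidestep. You need $F'(\eta(x_k))-a(x_k)/b(x_k)$ to be bounded \emph{below} by a positive constant along the maximizing sequence, since you have multiplied by $\phi(x_k)>\phi^*/2>0$. But the hypothesis $\sup_M a_-/b<+\infty$ only yields $a/b\geq -C$, i.e.\ it controls $a/b$ from \emph{below}; there is no hypothesis at all bounding $a/b$ from \emph{above}, so $a(x_k)/b(x_k)$ can drift to $+\infty$ and the quantity $F'(\eta)-a/b$ can be arbitrarily negative. No contradiction results. (You wrote that the assumption ``prevents $a/b$ from drifting to $-\infty$'' — that is correct but it is the wrong direction for what you need.) A secondary weakness is that the $\zeta$-increasing hypothesis gives only $F'=(tf(t))'\geq 0$; it does not by itself furnish a uniform positive lower bound $F'\geq c_0>0$ on the compact interval in which $\eta$ lands, so even if $a$ were favorable the mean-value-theorem step would not close.

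The paper's proof is arranged precisely to make the uncontrolled positive part of $a$ harmless: it sets $\zeta=\sup_M(u/v)$ and works with $\varphi=u-\zeta v$, which satisfies $\varphi\leq 0$ and $\sup_M\varphi=0$. Because $\varphi\leq 0$, one has $-a\varphi\geq a_-\varphi$ regardless of how large $a$ is, and the hypothesis on $a_-/b$ is exactly what is needed to bound the resulting coefficient; the $\zeta$-increasing property is then used not through $F'$, but to produce a strictly positive, $a$-independent source term $\zeta v\bigl(f(\zeta v)-f(v)\bigr)\geq \zeta A\,v f(v)\geq B>0$, which dominates near $\sup\varphi=0$ and yields the contradiction with the $1/b$-weak maximum principle. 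In short, the choice of $\zeta v$ rather than $v$, together with the sign $\varphi\leq 0$, is the essential idea your argument is missing.
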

As an immediate consequence, we have
\begin{cor}
In the assumption of Theorem \ref{Fthm}, the equation
\[
Lu+a(x)u-b(x)uf(u)=0
\]
has at most one non-negative, non-trivial, bounded solution $u$ with $\liminf_{x\rightarrow\infty}u(x)>0$.
\end{cor}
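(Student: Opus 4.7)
The plan is to deduce the uniqueness statement by applying Theorem \ref{Fthm} twice, exploiting the symmetry that a bounded classical solution of the equation $Lw+a(x)w-b(x)wf(w)=0$ simultaneously satisfies the two differential inequalities appearing in \rf{F2}.

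First, suppose by contradiction that $u_1,u_2\in\mathcal{C}^2(M)$ are two non-negative, non-trivial, bounded solutions with
\[
\liminf_{x\to\infty}u_i(x)>0,\qquad i=1,2.
\]
Since each $u_i$ solves the equation with equality, each of them satisfies both
\[
Lu_i+a(x)u_i-b(x)u_if(u_i)\geq 0\quad\text{and}\quad Lu_i+a(x)u_i-b(x)u_if(u_i)\leq 0.
\]
Moreover, from $\liminf_{x\to\infty}u_i(x)>0$ there exist a compact set $K\subset M$ and a constant $C_1>0$ such that $u_i(x)\geq C_1$ on $M\setminus K$ for both $i=1,2$, while boundedness of $u_i$ gives a constant $C_2$ with $u_i(x)\leq C_2$ on $M$.

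Next, I apply Theorem \ref{Fthm} with the choice $u:=u_1$ (regarded as a subsolution) and $v:=u_2$ (regarded as a supersolution): hypothesis \rf{F3} i) is verified by $u_2\geq C_1$ outside $K$, and \rf{F3} ii) by $u_1\leq C_2$; the remaining hypotheses on $a$, $b$, $f$ and on the validity of the $1/b$-weak maximum principle for $L$ are assumed from the statement. Theorem \ref{Fthm} therefore yields
\[
u_1(x)\leq u_2(x)\qquad\text{on }M.
\]
Interchanging the roles, I apply Theorem \ref{Fthm} once more with $u:=u_2$ and $v:=u_1$; the same hypotheses hold (with $u_1\geq C_1$ outside $K$ and $u_2\leq C_2$), and we obtain
\[
u_2(x)\leq u_1(x)\qquad\text{on }M.
\]
Combining the two inequalities gives $u_1\equiv u_2$, proving the corollary.

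There is no real obstacle beyond checking that the hypotheses of Theorem \ref{Fthm} are preserved under swapping the two solutions; the only subtle point, which is taken care of by the assumption $\liminf_{x\to\infty}u(x)>0$, is the need for a common uniform positive lower bound outside a compact set so that both solutions can play the role of the supersolution $v$ in turn. The non-triviality assumption is used only to ensure that we are comparing genuine solutions (and, implicitly, that the $\liminf$ condition is meaningful).
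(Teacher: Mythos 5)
Your proof is correct and is precisely the argument the paper intends: the corollary is stated as an immediate consequence of Theorem \ref{Fthm}, obtained by applying it twice with the roles of the two solutions interchanged, exactly as you do. The verification that boundedness supplies \rf{F3} ii) and that $\liminf_{x\rightarrow\infty}u(x)>0$ supplies \rf{F3} i) for each solution in turn is the only content, and you have it.
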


\begin{proof}[Proof of Theorem \ref{Fthm}]
We can assume that $u\not\equiv 0$, otherwise, there is nothing to prove. Next, the differential inequality
\[
Lv+a(x)v-b(x)vf(v)\leq 0
\]
and \rf{F3} i), together with the strong maximum principle (see the observation after the proof of Theorem 3.5 at
page 35 of \cite{GT}), imply $v>0$ on $M$. This fact and \rf{F3} tells us that
\beq
\label{F4}
\zeta=\sup_M\frac{u}{v}
\eeq
satisfies
\[
0<\zeta<+\infty.
\]
If $\zeta\leq 1$ then $u\leq v$ on $M$. Let us assume by contradiction that $\zeta>1$ and define
\[
\varphi=u-\zeta v
\]
Note that $\varphi\leq 0$ on $M$ and it is not hard to realize, using \rf{F3} and \rf{F4}, that
\beq
\label{F5}
\sup_M\varphi=0.
\eeq
We now use \rf{F2} and the linearity of $L$ to compute
\beq
\label{F6}
L\varphi\geq -a(x)\varphi+b(x)\left[uf(u)-\zeta vf(\zeta v)\right]+b(x)\zeta v\left[f(\zeta v)-f(v)\right].
\eeq
Let
\[
h(x)=\left\{\begin{array}{cc}
\left[f(u)+uf'(u)\right](x) & \mathrm{if} \quad u(x)=\zeta v(x) \\
{} & \\
\displaystyle{\frac{1}{u(x)-\zeta v(x)}\int_{\zeta v(x)}^{u(x)}\left[f(t)+tf'(t)\right]dt} & \mathrm{if} \quad u(x)<\zeta v(x).
\end{array} \right.
\]
Observe that $h$ is continuous on $M$ and non-negative, since
\[
(tf(t))'=f(t)+tf'(t)\geq 0 \quad on \quad \mathbb{R}^{+}.
\]
Furthermore, we can re-write \rf{F6} in the form
\[
L\varphi\geq \left[-a(x)+b(x)h(x)\right]\varphi+b(x)\zeta v\left[f(\zeta v)-f(v)\right],
\]
and using $-a(x)\varphi\geq a_{-}(x)\varphi$ we get
\beq
\label{F7}
L\varphi\geq \left[a_{-}(x)+b(x)h(x)\right]\varphi+b(x)\zeta v\left[f(\zeta v)-f(v)\right],
\eeq
Let
\[
\Omega_{-1}=\{x\in M : \varphi(x)>-1 \}.
\]
On $\Omega_{-1}$ we have
\beq
\label{F8}
v(x)=\frac{1}{\zeta}(u(x)-\varphi(x))\leq\frac{1}{\zeta}(C+1)
\eeq
for some positive constant $C$, since $u$ is bounded above on $M$. Using definition of $h$ and the mean value theorem
for integrals, we deduce
\[
h(x)=f(y)+yf'(y)
\]
for some $y=y(x)\in [u(x),\zeta v(x)]$. Since $u(x)$ and $v(x)$ are bounded above on $\Omega_{-1}$
\beq
\label{F9}
h(x)\leq C
\eeq
on $\Omega_{-1}$ for some constant $C>0$.

Next we recall that $b(x)>0$ on $M$ to re-write \rf{F7} in the form
\[
\frac{1}{b(x)}L\varphi\geq \left[\frac{a_{-}(x)}{b(x)}+h(x)\right]\varphi+\zeta v\left[f(\zeta v)-f(v)\right].
\]
Since $\varphi\leq 0$, \rf{F1} ii) and \rf{F9} imply
\[
\left[\frac{a_{-}(x)}{b(x)}+h(x)\right]\varphi\geq C\varphi
\]
for some appropriate constant $C>0$ on $\Omega_{-1}$. Thus
\[
\frac{1}{b(x)}L\varphi\geq C\varphi+\zeta v\left[f(\zeta v)-f(v)\right]
\]
on $\Omega_{-1}$. Since $f$ is $\zeta$-increasing, there exists $A>0$ such that
\[
\zeta v\left[f(\zeta v)-f(v)\right]\geq \zeta Avf(v) \quad on \quad \Omega_{-1}.
\]
Now we use the fact that $v$, and hence $vf(v)$, is bounded from below by a positive constant to get
\[
\frac{1}{b(x)}L\varphi\geq C\varphi+B \quad on \Omega_{-1},
\]
for some positive constant $B$. Finally, we choose $0<\varepsilon<1$ sufficiently small such that
\[
C\varphi>-\frac{1}{2}B
\]
on
\[
\Omega_{-\varepsilon}=\{x\in M : \varphi(x)>-\varepsilon \}\subset\Omega_{-1}.
\]
Therefore,
\[
\frac{1}{b(x)}L\varphi\geq \frac{1}{2}B>0 \quad on \quad \Omega_{-\varepsilon}.
\]
Having assume the validity of the $1/b$-weak maximum principle for the operator $L$ on $M$, we immediately get a contradiction, proving
that $\zeta\leq 1$.
\end{proof}

\section{A glimpse at the non-linear case}
\label{nonlinearcase}

In this section we will introduce an extension of Theorems A and B to the non-linear case. Since solutions of PDE's involving the type of operators
we shall consider are not, in general, even for constant coefficients, of class $\mathcal{C}^2$, it will be more appropriate to work, from the very beginning, in the weak setting.
Think for instance of the $p$-Laplace operator with $p\neq 2$, $p>1$.

We let $A:\R^{+}\fle\R{}$ and we define $\varphi(t)=tA(t)$. The next assumptions will be crucial to apply Theorems 5.3 and 5.6 of \cite{PuRS} and shall therefore be assumed all over this section:

\begin{itemize}
\item[(A1)] $A\in\mathcal{C}^1(\R^{+})$.\\

\item[(A2)] i) $\varphi'(t)>0$ on $\R^{+}$,  ii) $\varphi(t)\fle 0$ as $t\fle 0^{+}$.\\

\item[(A3)] $\varphi(t)\leq Ct^{\delta}$ on $(0,\omega)$ for some $\omega,C,\delta>0$.\\

\item[(T1)] $T$ is a positive definite, symmetric, 2-covariant tensor field on $M$.\\

\item[(T2)] For every $x\in M$ and for every $\xi\in T_xM$, $\xi\neq 0$, the bilinear form
\[
\frac{A'(|\xi|)}{|\xi|}\g{\xi}{\cdot}\odot T(\xi,\cdot)+A(|\xi|)T(\cdot,\cdot)
\]
is symmetric and positive definite. Here $\odot$ denotes the symmetric tensor product
\end{itemize}
Note that the above requirements are not mutually independent. Indeed the bilinear form in (T2) is automatically
symmetric when $T$ does. Furthermore, if write it in terms of $\varphi$, that is, for every $x\in M$ and for every
$\xi,v\in T_xM$, $\xi,v\neq 0$,
\[
\frac{1}{|\xi|^2}\left(\varphi'(|\xi|)-\frac{\varphi(|\xi|)}{|\xi|}\right)\g{\xi}{v}T(\xi,v)+\frac{\varphi(|\xi|)}{|\xi|}T(v,v)>0.
\]
In particular, the choice $v=\xi$ shows that
\[
\varphi'(t)>0 \quad \text{on} \quad \R^{+},
\]
that is, requirement i) in (A2). Request (T2) is in fact equivalent to i) in (A2) in case $T=t(x)\g{}{}$ is a
''pointwise conformal'' deformation of the metric for some smooth function $t(x)>0$ on $M$. Indeed, in this case (T2)
reduces to
\[
\frac{1}{|\xi|^2}\varphi'(|\xi|)t(x)\g{\xi}{v}^2+
\frac{\varphi(|\xi|)}{|\xi|^3}t(x)\left(|v|^2|\xi|^2-\g{\xi}{v}^2\right)>0
\]
for every $x\in M$ and for every $\xi,v\in T_xM$, $\xi,v\neq 0$.

Having fixed a vector field $X$ on $M$, we define the following operator $L=L_{A,T,X}$ acting on $\mathcal{C}^1(M)$:
\[
Lu=\div\left(A(|\nabla u|)T(\nabla u,\cdot)^{\sharp}\right)-\g{X}{\nabla u}
\]
for each $u\in\mathcal{C}^1(M)$,
where ${}^\sharp:T^*M\fle TM$ denotes the musical isomorphism.
Of course, the above operator $L$ has to be understood in the appropriate weak sense.

Observe that sometimes we shall refer to $\omega, C$ and $\delta$ in (A3) as to the \textit{structure constants} of the operator $L$.

$L$ gives rise to various familiar operators. For instance, choosing $T=\g{}{}$ and $X=0$ we have
\begin{itemize}
\item[1.] For $\varphi(t)=t^{p-1}$, $p>1$,
\[
Lu=\div\left(|\nabla u|^{p-2}\nabla u\right)
\]
is the usual $p$-Laplacian. Note that for the structural constants we have $C=1$, $\delta=p-1$ and $\omega=+\infty$.
Of course the case $p=2$ yields the usual Laplace-Beltrami operator.
\item[2.] For $\varphi(t)={t}/{\sqrt{1+t^2}}$ the operator
\[
Lu=\div\left(\frac{\nabla u}{\sqrt{1+|\nabla u|^2}}\right)
\]
is the usual mean curvature operator. Here $C=1$, $\delta=1$ and $\omega=+\infty$.
\end{itemize}
And so on.

We let, as in the linear case, $q(x)\in\mathcal{C}^0(M)$, $q(x)\geq 0$, be such that, for some compact $K\subset M$,
$q(x)>0$ on $M\setminus K$. However, since our setting now is that of solutions in the weak sense, for technical reasons
(see for instance \rf{NL1.3} in the proof of Theorem A'' below) we need the local integrability of $1/q$ also inside $K$.
Thus, from now on we suppose
\beq
\label{Q}
\tag{Q}
\frac{1}{q}\in L^1_{loc}(M).
\eeq
This fact was also pointed out in Remark \rl{rmk3.1} of the linear case whenever we deal with functions $u$ on $M$ which are merely of class
$\mathcal{C}^1$.

Next, we introduce the following Khas'minski{\u\i} type condition.
\begin{defi}
\label{q-SK}
We say that the (q-SK) condition holds if there exists a telescoping exhaustion of relatively compact open sets
$\{\Sigma_j\}_{j\in\mathbb{N}}$ such that $K\subset\Sigma_1$, $\overline{\Sigma}_j\subset\Sigma_{j+1}$ for every $j$
and, for any pair $\Omega_1=\Sigma_{j_1}$, $\Omega_2=\Sigma_{j_2}$, with $j_1<j_2$, and for each $\varepsilon>0$, there exists $\gamma\in\mathcal{C}^0(M\setminus\Omega_1)\cap\mathcal{C}^1(M\setminus\overline{\Omega}_1)$ with
the following properties:
\begin{itemize}
\item[i)] $\gamma\equiv 0$ on $\partial\Omega_1$,
\item[ii)] $\gamma>0$ on $M\setminus\Omega_1$,
\item[iii)] $\gamma\leq\varepsilon$ on $\Omega_2\setminus\Omega_1$,
\item[iv)] $\gamma(x)\fle+\infty$ when $x\fle\infty$,
\item[v)] $q(x)L\gamma\leq\varepsilon$ on $M\setminus\overline{\Omega}_1$.
\end{itemize}
\end{defi}
Since property v) has to be intended in the weak sense we mean that
\[
L\gamma\leq\frac{\varepsilon}{q(x)} \text{ weakly on } M\setminus\overline{\Omega}_1,
\]
that is, for all $\psi\in\mathcal{C}^\infty_0(M\setminus\overline{\Omega}_1)$, $\psi\geq 0$,
\[
\int_{M\setminus\overline{\Omega}_1}\left(A(|\nabla\gamma|)T(\nabla\gamma,\nabla\psi)+\g{X}{\nabla\gamma}\psi+\frac{\varepsilon}{q}\psi\right)\geq 0.
\]
Of course we expect the (q-SK) condition in Definition \rl{q-SK} to be equivalent in the linear case to the weak form of
\rf{Gam} of Theorem A, which obviously reads as follows:
\begin{defi}
\label{q-KL}
We say that the (q-KL) condition holds if there exist a compact set $H\supset K$ and a function
$\tilde{\gamma}\in\mathcal{C}^1(M)$ with the following properties:
\begin{itemize}
\item[j)] $\tilde{\gamma}(x)\fle+\infty$ when $x\fle\infty$,
\item[jj)] $q(x)L\tilde{\gamma}\leq B$ on $M\setminus H$ for some constant $B$, in the weak sense.
\end{itemize}
\end{defi}
Obviously, the (q-SK) condition implies the (q-KL) condition simply by choosing $H=\overline{\Omega}_2$, setting
$\tilde{\gamma}=\gamma$ on $M\setminus{\Omega}_2$ and extending it on $\Omega_2$ to be of class $\mathcal{C}^1$ on $M$.
We shall prove the equivalence of the two conditions in the linear case after the proof of Theorem A''.
The point is that in the form (q-SK) the Khas'minski{\u\i} type condition is not only sufficient for the validity of the
$q$-weak maximum principle but indeed equivalent in many cases (see \cite{MV}). For a certain class of operators this happens also in the non-linear case as shown
in \cite{AMR} (in preparation).

Before stating Theorem A'' we recall that for an operator $L$, a function $q(x)>0$ on an open set
$\Omega\subset M$ and $u\in\mathcal{C}^1(\Omega)$
the inequality
\[
\inf_\Omega\{ q(x)Lu(x)\}\leq 0
\]
holds in the weak sense if for each $\varepsilon>0$
\[
-\int_{\Omega}\left(A(|\nabla u|)T(\nabla u,\nabla\psi)+\g{X}{\nabla u}\psi\right)\leq
\int_\Omega\frac{\varepsilon}{q}\psi
\]
for each $\psi\in\mathcal{C}^\infty_0(\Omega)$, $\psi\geq 0$.

We are now ready to state the non-linear version of Theorem A.
\begin{thmA''}
\label{NLthA}
Let $(M,\g{}{})$ be a Riemannian manifold and let $L$ be as above. Let $q(x)\in\mathcal{C}^0(M)$, $q(x)\geq 0$, and
suppose that $q(x)>0$ outside some compact set $K\subset M$ and $1/q\in L^1_{loc}(M)$.
Assume the validity of (q-SK). If $u\in\mathcal{C}^1(M)$ and $u^*=\sup_Mu<+\infty$ then for each $\eta>0$
\beq
\label{NL1.1}
\inf_{A_\eta}\{q(x)Lu(x)\}\leq 0
\eeq
holds in the weak sense, where
\beq
\label{NL1.2}
A_\eta=\{ x\in M : u(x)>u^*-\eta \}.
\eeq
\end{thmA''}
\begin{proof}
We argue by contradiction and we suppose that for some $\eta>0$ there exists $\varepsilon_0>0$ such that
\[
Lu\geq\frac{\varepsilon_0}{q(x)}
\]
holds weakly on $A_\eta$, that is, for each $\psi\in\mathcal{C}^\infty_0(A_\eta)$, $\psi\geq 0$,
\beq
\label{NL1.3}
\int_{A_\eta}\left(A(|\nabla u|)T(\nabla u,\nabla\psi)+\g{X}{\nabla u}\psi+\frac{\varepsilon_0}{q}\psi\right)\leq 0.
\eeq
Note that since in general $A_\eta\not\subset M\setminus K$ it is here essential assumption \rf{Q}.

First we observe that $u^*$ cannot be attained at any point $x_0\in M$. Otherwise $x_0\in A_\eta$ and, because of \rf{NL1.3}, on the open set $A_\eta$ it holds weakly
\beq
\label{NL1.4}
Lu\geq 0.
\eeq
Since, in our assumptions, the strong maximum principle given in Theorem 5.6 of \cite{PuRS} holds, we deduce that $u\equiv u^*$ on the connected component of $A_\eta$ containing $x_0$, which contradicts \rf{NL1.3}.

Next we let $\Sigma_j$ be the telescoping sequence of relatively compact open domains of condition (q-SK). Given $u^*-\frac{\eta}{2}$, there exists $\Sigma_{j_1}$ such that
\[
u^*_{j_1}=\sup_{\overline{\Sigma}_{j_1}}u>u^*-\frac{\eta}{2}.
\]
We set $\Omega_1=\Sigma_{j_1}$ and define
\[
u^*_1=u^*_{j_1}.
\]
Note that, since $u^*$ is not attained on $M$
\beq
\label{NL1.5}
u^*-\frac{\eta}{2}<u^*_1<u^*.
\eeq
We can therefore fix $\alpha$ so that
\beq
\label{NL1.6}
u^*_1<\alpha<u^*.
\eeq
Since $\alpha>u^*_1$, there exists $\Sigma_{j_2}$ with $j_2>j_1$ such that, setting $\Omega_2=\Sigma_{j_2}$, $u^*_2=\sup_{\Omega_2}u=\max_{\bar{\Omega}_2}u$, we have
\[
\overline{\Omega}_1\subset\Omega_2
\]
and furthermore
\beq
\label{NL1.7}
u^*_1<\alpha<u^*_2<u^*.
\eeq

We fix $\bar{\eta}>0$ so small that
\beq
\label{NL1.8}
\alpha+\bar{\eta}<u^*_2
\eeq
and
\beq
\label{NL1.9}
\bar{\eta}<\varepsilon_0.
\eeq
We apply the (q-SK) condition with the choice $\varepsilon=\bar{\eta}$ and $\Omega_1$ and $\Omega_2$ as above to obtain the existence of $\gamma\in\mathcal{C}^0(M\setminus\Omega_1)\cap\mathcal{C}^1(M\setminus\overline{\Omega}_1)$
satisfying the properties listed in Definition \rl{q-SK}. Construct
\beq
\label{NL1.10}
\sigma(x)=\alpha+\gamma(x).
\eeq
Then
\beq
\label{NL1.11}
\sigma(x)=\alpha \text{ on } \partial\Omega_1,
\eeq
\beq
\label{NL1.12}
\alpha<\sigma(x)\leq\alpha+\bar{\eta} \text{ on } \Omega_2\setminus\bar{\Omega}_1,
\eeq
\beq
\label{NL1.13}
\sigma(x)\rightarrow +\infty \text{ as } x\rightarrow\infty,
\eeq
and, since $\nabla\sigma=\nabla\gamma$, $L\sigma=L\gamma$ and by v) of Definition \rl{q-SK}
\beq
\label{NL1.14}
q(x)L\sigma\leq\bar{\eta} \text{ in the weak sense on } M\setminus\bar{\Omega}_1.
\eeq

Next, we consider the function $u-\sigma$. Because  of \rf{NL1.11} and \rf{NL1.6}, we have for every $x\in\partial\Omega_1$
\beq
\label{NL1.15}
(u-\sigma)(x)=u(x)-\alpha\leq u^*_1-\alpha<0.
\eeq
Since $u^*_2=\max_{\bar{\Omega}_2}u$ and $\bar{\Omega}_2$ is compact, $u^*_2$ is attained at some $\bar{x}\in\bar{\Omega}_2$. Note that $\bar{x}\notin\bar{\Omega}_1$ because otherwise
\[
u^*_1\geq u(\bar{x})=u^*_2,
\]
contradicting \rf{NL1.7}. Thus $\bar{x}\in\bar{\Omega}_2\setminus\bar{\Omega}_1$. By \rf{NL1.8} we have
\[
u(\bar{x})>\alpha+\bar{\eta}.
\]
Thus, by \rf{NL1.12} and \rf{NL1.8}, we have
\beq
\label{NL1.16}
(u-\sigma)(\bar{x})=u^*_2-\sigma(\bar{x})\geq u^*_2-\alpha-\bar{\eta}>0.
\eeq
Finally, because of \rf{NL1.13}, there exists $\Sigma_\ell$, $\ell>j_2$, such that
\beq
\label{NL1.17}
(u-\sigma)(x)<0 \text{ on } M\setminus\Sigma_\ell.
\eeq

Because of \rf{NL1.15}, \rf{NL1.16} and \rf{NL1.17} the function $u-\sigma$ attains an absolute maximum $m>0$ at a certain point
$z_0\in\Sigma_\ell\setminus\bar{\Omega}_1\subset M\setminus\bar{\Omega}_1$. At $z_0$, and by \rf{NL1.6} and \rf{NL1.5}, we have
\[
u(z_0)=\sigma(z_0)+m>\sigma(z_0)=\alpha+\gamma(z_0)\geq\alpha>u_1^*>u^{*}-\frac{\eta}{2},
\]
an hence $z_0\in A_\eta$. It follows that
\beq
\label{NL1.18}
\Xi=\{ x\in M\setminus\bar{\Omega}_1 : (u-\sigma)(x)=m \}\subset A_\eta.
\eeq
Since
$A_\eta$ is open there exists a neighborhood $U_\Xi$ of $\Xi$ contained in $A_\eta$. Pick any $y\in\Xi$, fix $\beta\in(0,m)$ and
call $\Xi_{\beta,y}$ the connected component of the set
\[
\{ x\in M\setminus\bar{\Omega}_1 : (u-\sigma)(x)>\beta \}
\]
containing $y$. Since $\beta>0$,
\[
\Xi_{\beta,y}\subset\bar{\Sigma}_\ell\setminus\bar{\Omega}_1\subset M\setminus\bar{\Omega}_1,
\]
and we can also choose $\beta$ sufficiently near to $m$ so that $\bar{\Xi}_{\beta,y}\subset A_\eta$.
Furthermore, $\bar{\Xi}_{\beta,y}$ is compact.
Because of \rf{NL1.14}, \rf{NL1.9} and
\rf{NL1.3}, on $\Xi_{\beta,y}$ we have
\[
q(x)Lu(x)\geq\varepsilon_0>q(x)L\gamma(x)
\]
in the weak sense. Furthermore,
\[
u(x)=\sigma(x)+\beta<\sigma(x) \quad \text{ on } \partial\Xi_{\beta,y}.
\]
Hence by Theorem 5.3 of \cite{PuRS},
\[
u(x)\leq \sigma(x) \quad \text{ on } \Xi_{\beta,y}.
\]
This contradicts the fact that $y\in\Xi_{\beta,y}$, indeed,
\[
u(y)=\sigma(y)+m>\sigma(y)
\]
since $m>0$. This completes the proof of Theorem A''.
\end{proof}

Suppose now that $L$ is linear, that is, $A(t)=1$ (and hence $\varphi(t)=t$). Once (T1) is satisfied, assumptions (A1), (A2), (A3) and (T2)
are also satisfied. Let $q(x)\in\mathcal{C}^0(M)$, $q(x)\geq 0$, be such that, for some compact $K\subset M$,
$q(x)>0$ on $M\setminus K$ and $1/q\in L^1_{loc}(M)$. Observe that in this case the (q-KL) condition and the linearity of
$L$ imply the (q-SK) condition. Indeed, fix a strictly increasing divergent
sequence $\{T_j\}\nearrow+\infty$ and let
\[
\Sigma_j=\{ x\in M : \tilde{\gamma}(x)<T_j \}.
\]
Obviously, each $\Sigma_j$ is open and because of j) in (q-KL) condition one immediately verifies that
$\bar{\Sigma}_j=\{ x\in M : \tilde{\gamma}(x)\leq T_j \}$ is compact. For the same reason we can suppose to have chosen $T_1$ sufficiently large
that $K\subset H\subset \Sigma_1$. Furthermore $\bar{\Sigma}_j\subset\Sigma_{j+1}$ and again by j) in (q-KL) condition
$\{\Sigma_j\}$ is a telescoping exhaustion. Consider any pair
\[
\Omega_1=\Sigma_{j_1}=\{ x\in M : \tilde{\gamma}(x)<T_{j_1} \}
\]
and
\[
\Omega_2=\Sigma_{j_2}=\{ x\in M : \tilde{\gamma}(x)<T_{j_2} \}
\]
with $j_2>j_1$, and choose $\varepsilon>0$. Let $\sigma\in(0,\sigma_0)$ and define $\gamma:M\setminus\Omega_1\rightarrow\mathbb{R}^{+}_0$
by setting
\[
\gamma(x)=\sigma(\tilde{\gamma}(x)-T_{j_1}).
\]
Then
\begin{itemize}
\item[i)] $\gamma(x)=0$ for every $x\in\partial\Omega_1$,
\item[ii)] $\gamma(x)>0$ if $x\in M\setminus\overline{\Omega}_1=\{x\in M : \tilde{\gamma}(x)>T_{j_1}\}$,
\item[iii)] on $\Omega_2\setminus\Omega_1=\{ x\in M : T_{j_1}\leq\tilde{\gamma}(x)<T_{j_2}\}$ we have
$\gamma(x)<\sigma(T_{j_2}-T_{j_1})$ and hence, up to have chosen $\sigma_0$ sufficiently small, $\gamma(x)\leq\varepsilon$ on
$\Omega_2\setminus\Omega_1$,
\item[iv)] $\gamma(x)\fle+\infty$ when $x\fle\infty$, because of j), and
\item[v)] on $M\setminus\overline{\Omega}_1$ and by linearity of $L$,
\[
q(x)L\gamma=q(x)L(\sigma(\tilde{\gamma}-T_{j_1}))=q(x)\sigma L\tilde{\gamma}\leq\sigma B\leq\varepsilon
\]
because of jj) and up to have chosen $\sigma_0$ sufficiently small.
\end{itemize}

\begin{rmk}
It is worth giving some examples where the (q-SK) condition is satisfied. For the sake of simplicity we limit ourselves to the case $T=\g{}{}$ and
$X\equiv 0$. Let $(M,\g{}{})$ be a complete, non-compact Riemannian manifold of dimension $m\geq 2$. Let $o\in M$ be
a fixed reference point, denote by $r(x)$ the Riemannian distance from $o$ and suppose that
\beq
\label{NL2.1}
\Ric(\nabla r,\nabla r)\geq -(m-1)G(r)^2
\eeq
for some positive non-decreasing function $G(r)\in\mathcal{C}^0(\mathbb{R}^{+}_0)$, $G(r)>0$, with $1/G\not\in L^1(+\infty)$. Similarly to what has been made in section
\rl{hypersurfaces} and for the same $\psi$ defined there (see \rf{NL3.1}), by the Laplacian comparison theorem we have
\beq
\label{NL2.2}
\Delta r\leq (m-1)\frac{\psi'}{\psi}(r)
\eeq
weakly on $M$
for $r\geq R_0>0$ sufficiently large.

Suppose now that the function $q(x)\in\mathcal{C}^0(M)$, $q(x)\geq 0$, satisfies
\beq
\label{NL2.4}
q(x)\leq \Theta(r(x))
\eeq
outside a compact set $K\subset M$, for some non-increasing continuous function
$\Theta:\mathbb{R}^{+}_0\rightarrow\mathbb{R}^{+}$ with the property that
\beq
\label{revised6.1}
\Theta(t)\leq B G^{\delta-1}(t)
\eeq
for $t\gg 1$ and some constant $B>0$ (here $\delta$ is as in (A3)). Note that if $\delta\geq 1$, \rf{revised6.1}
is automatically satisfied.

Fix $\sigma>0$ and $R\geq R_0$ such that $K\subset B_{R}$, $B_{R}$ being the geodesic ball of radius $R$,  and define the function
\beq
\label{NL2.5}
\chi_\sigma(r)=\int_{R}^{r}\varphi^{-1}\left(\sigma h(t)\right)dt, \quad r\in [R,+\infty),
\eeq
where
\[
h(t)=\psi^{1-m}(t)\int_{R}^t\frac{\psi^{m-1}(s)}{\Theta(s)}ds.
\]
Since $\varphi:\mathbb{R}^{+}_0\rightarrow[0,\varphi(+\infty))=I\subseteq\mathbb{R}^{+}_0$ increasingly,
$\varphi:I\rightarrow\mathbb{R}^{+}_0$. Therefore in order that $\chi_\sigma$ be well defined when
$\varphi(+\infty)<+\infty$, we need that for every $t\in[R,+\infty)$
\beq
\label{NL2.6}
\sigma h(t)\in I.
\eeq
Towards this end we note that
\beq
\label{NL2.7}
\frac{\psi'}{\psi}(t)=G(t)\frac{e^{\int_0^tG(s)ds}}{e^{\int_0^tG(s)ds}-1}\sim CG(t) \quad \text{ as } \quad
t\rightarrow+\infty.
\eeq
Then
\beq
h(t)\leq\frac{1}{\Theta(t)}\psi^{1-m}(t)\int_{R}^t\psi^{m-1}(s)ds\leq\frac{C}{\Theta(t)G(t)}
\eeq
for $t\gg 1$ and some $C>0$. The assumption
\[
\limsup_{r\rightarrow+\infty}\frac{1}{\Theta(r)G(r)}<+\infty
\]
is therefore enough to guarantee that $h(t)$ is bounded above. By choosing $\sigma$ sufficiently small, say
$0<\sigma\leq\sigma_0$, we obtain the validity of \rf{NL2.6} so that \rf{NL2.5} is well defined on $[R,+\infty)$.

Define $\gamma(x)=\chi_\sigma(r(x))$ for $x\in M\setminus B_{R}$ and note that
\begin{itemize}
\item[i)] $\gamma\equiv 0$ on $\partial B_{R}$,
\item[ii)] $\gamma>0$ on $M\setminus \overline{B_{R}}$,
\end{itemize}
Moreover, having fixed $\varepsilon>0$ and a second geodesic ball $B_{\hat{R}}$ with $\hat{R}>R$, since
$\varphi^{-1}(t)\rightarrow 0$ as $t\rightarrow 0^+$, up to choosing $\sigma>0$ sufficiently small we also have
$\chi_\sigma(r)\leq \varepsilon$ if $R\leq r<\hat{R}$, so that
\begin{itemize}
\item[iii)] $\gamma\leq\varepsilon$ on $B_{\hat{R}}\setminus B_{R}$,
\end{itemize}
On the other hand, since $1/G\not\in L^1(+\infty)$, to prove that
\begin{itemize}
\item[iv)] $\gamma(x)\fle+\infty$ when $x\fle\infty$
\end{itemize}
it suffices to show that
\[
\varphi^{-1}(\sigma h(t))\geq\frac{\hat{C}}{G(t)} \quad \text{ for } t\gg 1
\]
for some constant $\hat{C}>0$. Equivalently, that there exists a constant $\hat{C}>0$ such that
\beq
\label{revised6.2}
\frac{h(t)}{\varphi\left(\frac{\hat{C}}{G(t)}\right)}\geq\frac{1}{\sigma}  \quad \text{ for } t\gg 1.
\eeq
Without loss of generality we can suppose $G(t)\rightarrow+\infty$ as $t\rightarrow+\infty$.
By the structural condition (A3) on $\varphi$ we have
\[
\varphi\left(\frac{\hat{C}}{G(t)}\right)\leq C\frac{\hat{C}^\delta}{G(t)^\delta},
\]
so that
\[
\frac{h(t)}{\varphi\left(\frac{\hat{C}}{G(t)}\right)}\geq\frac{A(t)}{B(t)}
\]
with
\[
A(t)=G(t)^\delta\int_{R}^t\frac{\psi^{m-1}(s)}{\Theta(s)}ds
\]
and
\[
B(t)=C\hat{C}^\delta\psi^{m-1}(t).
\]
Note that both $A(t)$ and $B(t)$ diverge to $+\infty$ as $t\rightarrow+\infty$. Hence,
\[
\liminf_{t\rightarrow+\infty}\frac{A(t)}{B(t)}\geq\liminf_{t\rightarrow+\infty}\frac{A'(t)}{B'(t)}.
\]
A computation that uses $G'\geq 0$, $\Theta>0$ and \rf{revised6.1} shows that
\[
\frac{A'(t)}{B'(t)}\geq\frac{G(t)}{BC\hat{C}^\delta(m-1)\frac{\psi'(t)}{\psi(t)}}, \quad t\gg 1,
\]
and since ${\psi'(t)}/{\psi(t)}\sim G(t)$ as $t\rightarrow+\infty$, we can choose $\hat{C}>0$ sufficiently small that
\[
\liminf_{t\rightarrow+\infty}\frac{A'(t)}{B'(t)}\geq\frac{1}{\sigma},
\]
proving the validity of \rf{revised6.2}

Clearly, by definition, $\chi_\sigma(t)$ is non-decreasing and satisfies $\chi'_\sigma(t)=\varphi^{-1}(\sigma h(t))$,
that is, $\varphi(\chi'_\sigma(t))=\sigma h(t)$. Therefore
\[
\nabla\gamma=\chi'_\sigma(r)\nabla r, \quad |\nabla\gamma|=\chi'_\sigma(r) \quad \text{and} \quad \varphi(|\nabla\gamma|)=\sigma h(r).
\]
Since
\[
h'(t)=\frac{1}{\Theta(t)}-(m-1)\frac{\psi'}{\psi}(t)h(t),
\]
a computation using \rf{NL2.2} and \rf{NL2.4} gives
\begin{eqnarray}
\nonumber L\gamma & = & \div\left(\frac{\varphi(|\nabla\gamma|)}{|\nabla\gamma|}\nabla\gamma\right)=\div(\sigma h(r)\nabla r)
=\sigma h'(r)|\nabla r|^2+\sigma h(r)\Delta r \\
{} & = & \frac{\sigma}{\Theta(r)}+\sigma h(r)\left(\Delta r-(m-1)\frac{\psi'}{\psi}(r)\right)\leq
\frac{\sigma}{\Theta(r)}\leq \frac{\sigma}{q(x)}
\end{eqnarray}
if $r\geq R$. That is,
\begin{itemize}
\item[v)] $q(x)L\gamma\leq\sigma$ on $M\setminus\overline{B_{R}}$
\end{itemize}
outside the cut locus and weakly on all of $M\setminus\overline{B_{R}}$ as it can be easily proved.

It is now clear how to satisfy the requirements of the (q-SK) condition in Definition\rl{q-SK} by choosing a
telescoping exhaustion $\{ B_{R+j}\}_{j\in\mathbb{N}}$.
\end{rmk}

\begin{rmk}
\label{remark6.4}
Here we introduce another example where the (q-SK) condition is satisfied with $T=\g{}{}$ and arbitrary $X$.
Let $(M,\g{}{})$ be a complete, non-compact Riemannian manifold of dimension $m\geq 2$. Let $o\in M$ be
a fixed reference point, denote by $r(x)$ the Riemannian distance from $o$ and suppose, as in the previous example,
that
\beq
\label{NL2.1b}
\Ric(\nabla r,\nabla r)\geq -(m-1)G(r)^2
\eeq
for some positive non-decreasing function $G(r)\in\mathcal{C}^0(\mathbb{R}^{+}_0)$, $G(r)>0$, with
$1/G\not\in L^1(+\infty)$. We know that, for the same function $\psi$,
\beq
\label{NL2.2b}
\Delta r\leq (m-1)\frac{\psi'}{\psi}(r)\leq CG(r)
\eeq
weakly on $M$ for $r\geq R_0>0$ sufficiently large and some $C>0$.

Suppose now that the function $q(x)\in\mathcal{C}^0(M)$, $q(x)\geq 0$, satisfies
\beq
\label{NL2.4b}
q(x)\leq \frac{1}{G(r(x))+|X(x)|}
\eeq
outside a compact set $K\subset M$. Fix $\sigma>0$ and $R\geq R_0$ such that $K\subset B_{R}$, $B_{R}$ being the geodesic
ball of radius $R$ centered at $o$,  and define the function
\beq
\label{NL2.5b}
\gamma(x)=\sigma(r(x)-R) \quad \text{for } x\in M\setminus B_{R}.
\eeq
Obviously,
\begin{itemize}
\item[i)] $\gamma\equiv 0$ on $\partial B_{R}$,
\item[ii)] $\gamma>0$ on $M\setminus \overline{B_{R}}$,
\end{itemize}
Moreover, having fixed $\varepsilon>0$ and a second geodesic ball $B_{\hat{R}}$ with $\hat{R}>R$, up to choosing
$\sigma>0$ sufficiently small we also have
\begin{itemize}
\item[iii)] $\gamma\leq\varepsilon$ on $B_{\hat{R}}\setminus B_{R}$,
\end{itemize}
On the other hand, since $M$ is complete
\begin{itemize}
\item[iv)] $\gamma(x)\fle+\infty$ when $x\fle\infty$
\end{itemize}
Finally, a direct computation using \rf{NL2.2b} and \rf{NL2.4b} gives
\begin{eqnarray}
\nonumber L\gamma & = & \div\left(\frac{\varphi(|\nabla\gamma|)}{|\nabla\gamma|}\nabla\gamma\right)-\g{X}{\nabla\gamma}=
\div(\varphi(\sigma)\nabla r)-\sigma\g{X}{\nabla r}\\
{} & = & \varphi(\sigma)\Delta r -\sigma\g{X}{\nabla r}\leq \varphi(\sigma)CG(r)+\sigma|X|\\
{} & \leq & \varepsilon(G(r)+|X|)\leq \frac{\varepsilon}{q(x)}
\end{eqnarray}
if $r\geq R$, up to choosing $\sigma>0$ sufficiently small, since $\varphi(\sigma)\rightarrow 0$ as $\sigma\rightarrow 0^+$.
That is,
\begin{itemize}
\item[v)] $q(x)L\gamma\leq\varepsilon$ on $M\setminus\overline{B_{R}}$
\end{itemize}
outside the cut locus $\mathrm{cut}(o)$ and weakly on all of $M\setminus\overline{B_{R}}$ as it can be easily proved.
It is now clear how to satisfy the requirements of the (q-SK) condition in Definition\rl{q-SK} by choosing a
telescoping exhaustion $\{ B_{R+j}\}_{j\in\mathbb{N}}$.
\end{rmk}

For the next result we introduce the following strengthening of the (q-SK) condition.
\begin{itemize}
\item[vi)] $|\nabla\gamma|<\varepsilon$ on $M\setminus\Omega_1$.
\end{itemize}

\begin{thmB''}
\label{NLthB}
Let $(M,\g{}{})$ be a Riemannian manifold and let $L$ be as above. Let $q(x)\in\mathcal{C}^0(M)$, $q(x)\geq 0$ satisfy (Q).
Assume the validity of (q-SK$\nabla$). If $u\in\mathcal{C}^1(M)$ and $u^*=\sup_Mu<+\infty$ then for each $\eta>0$
\beq
\label{NL1.19}
\inf_{B_\eta}\{q(x)Lu(x)\}\leq 0
\eeq
holds in the weak sense, where
\[
B_\eta=\{ x\in M : u(x)>u^*-\eta \quad \text{and} \quad |\nabla u(x)|<\eta \}.
\]
\end{thmB''}
\begin{proof}
First of all note that the validity of (q-SK$\nabla$) implies, once we fix arbitrarily a pair $\Omega_1\subset\Omega_2$ ,
an $\varepsilon>0$ and a corresponding $\gamma$, that the metric is geodesically complete. Indeed, let
$\varsigma:[0,\ell)\rightarrow M$ be any divergent
path parametrized by arc-length. Thus $\varsigma$ lies eventually outside any compact subset of $M$. From vi),
$|\nabla\gamma|\leq\varepsilon$ outside the compact subset $\bar{\Omega}_1$. We set $h(t)=\gamma(\varsigma(t))$ on $[t_0,\ell)$,
where $t_0$ has been
chosen so that $\varsigma(t)\notin\bar{\Omega}_1$ for all $t_0\leq t<\ell$. Then, for every $t\in[t_0,\ell)$ we have
\[
|h(t)-h(t_0)|=\left|\int_{t_0}^{t}h'(s)ds\right|
\leq\int_{t_0}^{t}|\nabla\gamma(\varsigma(s))|ds\leq \varepsilon(t-t_0).
\]
Since $\varsigma$ is divergent, then $\varsigma(t)\rightarrow\infty$ as $t\rightarrow\ell^{-}$, so that
$h(t)\rightarrow+\infty$ as $t\rightarrow\ell^{-}$ because of iv). Therefore, letting
$t\rightarrow\ell^{-}$ in the inequality above, we conclude that $\ell=+\infty$. This shows that divergent paths
in $M$ have infinite length and in other words, that the metric is complete.

Since the metric is complete, we can apply Ekeland quasi-minimum principle to deduce that $B_\eta\neq\emptyset$ and therefore that
the infimum in \rf{NL1.19} is meaningful.

Now we proceed as in the proof of Theorem A'' substituting, as in the linear case, the subset $A_\eta$ with the smaller open
set $B_\eta$. We need to show that the compact set $\Xi$ defined in \rf{NL1.18} satisfies
$\Xi\subset B_\eta$. Because of \rf{NL1.8} it is enough to prove that for every $z\in\Xi$,
\beq
\label{NL1.20}
|\nabla u(z)|<\eta.
\eeq
But $z$ is a point of absolute maximum for $(u-\sigma)$ and $z\in M\setminus\bar{\Omega}_1$, hence using vi) of (q-SK$\nabla$),
\[
|\nabla u(z)|=|\nabla \sigma(z)|=|\nabla\gamma(z)|<\varepsilon.
\]
thus $\Xi\subset B_\eta$ and the rest of the proof is now exactly as at the end of Theorem A''. This finishes
the proof of Theorem B''.
\end{proof}

Suppose now that $L$ is linear; we have an analog condition (q-KL), that is, (q-KL$\nabla$), adding
\begin{itemize}
\item[jjj)] $|\nabla\tilde{\gamma}|\leq A$ on $M\setminus H$, for some constant $A>0$.
\end{itemize}
It is immediate to show that this condition and linearity of $L$ imply (q-KS$\nabla$).

\section*{Acknowledgments}
We thank Lucio Mari for helpful discussions on section \rl{nonlinearcase}.

\bibliographystyle{amsplain}

\end{document}